\documentclass[final,3p]{elsarticle}
 \usepackage{graphics}
 \usepackage{graphicx}
 \usepackage{epsfig}
\usepackage{amssymb}
 \usepackage{amsthm}
 \usepackage{lineno}
 \usepackage{amsmath}
   \numberwithin{equation}{section}
\usepackage{mathrsfs}

\NeedsTeXFormat{LaTeX2e}
\ProvidesPackage{natbib}
\newtheorem{thm}{Theorem}[section]

\newtheorem{con}[thm]{Conjecture}
\newtheorem{lem}[thm]{Lemma}

\newtheorem{rem}[thm]{Remark}

 \setcounter{section}{0}
\biboptions{sort&compress,square}
\journal{ }
\begin{document}
\begin{frontmatter}
 \author{Jian Wang}
\author{Yong Wang\corref{cor2}}
\ead{wangy581@nenu.edu.cn}
\cortext[cor2]{Corresponding author. Tel.:+86 431 85099589; Fax.:+86 431 85098237.}
\address{School of Mathematics and Statistics, Northeast Normal University,
Changchun, 130024, P.R.China}
\title{Double Conformal Invariants and the Wodzicki Residue   }
\begin{abstract}
For compact real manifolds, a new double conformal invariant is constructed using the Wodzicki residue and the $d$ operator
in the framework of Connes. In the flat case, we compute this double conformal invariant, and in some special cases, we also
compute this double conformal invariants. For complex manifolds, a new double conformal invariant is constructed using
the Wodzicki residue and the $\bar{\partial}$ operator in the same way, and this double conformal invariant is computed
in the flat case.
\end{abstract}

\begin{keyword}
 Wodzicki residue; double conformal invariants
\MSC[2000] 53A30, 58G20, 46L87
\end{keyword}
\end{frontmatter}
\section{Introduction}
\label{1}
Wodzicki  discovered a {\rm trace} on the algebra of all classical pseudodifferential  operators on a closed compact manifold in \cite{Wo},
 which is called the Wodzicki residue now. In \cite{Co}, for an even dimensional compact oriented conformal real manifold without
 boundary, a canonical Fredholm module was constructed and a conformal invariant was defined  by the Wodzicki residue. Connes  also
 gave an explanation of the Polyakov action and its 4-dimensional analogy by using his conformal invariant.  In \cite{Ug1},
 Connes' result was generalized to the higher dimensional case and an explicit expression of  the Connes' invariant in the flat case
 was given by Ugalde. In addition, the way of computation in the general case was indicated.
 In \cite{Wa1} and \cite{Wa2}, Wang generalized the Connes' framework to the case of manifolds with
 boundary by using the noncommutative residue on Boutet de Monvel's algebra found in \cite{FGLS}. In \cite{Zu1} and \cite{Zu2},
 the Polyakov action on a compact Riemann
 surface was expressed by using $\bar{\partial}$ operator. Let $g_{1}$ and $g_{2}$ be two Riemannian matrics on manifolds.
 A invariant $I(g_{1}, ~g_{2})$ about these two metrics is called a double conformal invariant, if $I(f^{1}g_{1}, ~f^{2}g_{2})=I(g_{1},~~g_{2})$
 for positive smooth functions $f^{1}$ and $f^{2}$. The motivation of this paper is to find new double conformal invariants
 for real manifolds and complex manifolds by the Wodzicki residue.

 This paper is organized as follows: In Section 2, for a compact real manifold, following the Connes' framework, we construct
 a Fredholm module associated to the  $d$ operator and define a double conformal invariant. In Section 3, we compute the leading
 symbol of $F$ defined in Section 2, $\sigma(F^{g}_{d}) $, and  ${\rm trace}[\sigma_L(F^{g_{1}}_{d})(\xi)\sigma_L(F^{g_{2}}_{d})(\eta)] $
 for $\xi, \eta$ not zero in $T^{*}_{x}M$. In the flat case, we get the formula of
the double conformal invariants. In Section 4, we compute this double conformal invariant for 2-dimensional case.
 In Section 5, for a compact complex manifold, we construct a Fredholm module associated to the $\bar{\partial}$
 operator and define a double conformal invariant. In Section 6, we compute this double conformal invariant in the flat case. In section 7,
  we compute this double conformal invariant  for 1-dimensional case.

 \section{A Double Conformally Invariant Differential Form $\Omega^{g_{1},g_{2}}_{d}(f_1,f_2)$ for Compact Real Manifolds }
\label{2}
Let $M$ be a $n$ dimensional compact connect manifold. Let $g^{1}$ and $g^{2}$ be Riemannian metrics on the tangent bundle $TM$ of $M$,
and $\{e_{1},e_{2},\cdots,e_{n}\}$ be local orthonormal basis on $M$. Let $\{e^{1},e^{2},\cdots,e^{n} \}$ be the dual basis,
and $\Phi=e^{1}\wedge e^{2}\wedge \cdots \wedge e^{n}$ be the canonical volume of $M$. Define the metric on $p-$form space $A^{p}$
 \begin{equation}
  g(e^{i_{1}}\wedge e^{i_{2}}\wedge\cdots\wedge e^{i_{n}},~ e^{j_{1}}\wedge e^{j_{2}}\wedge\cdots\wedge e^{j_{n}})=det|\langle e^{i_{l}},e^{j_{k}}\rangle|.
\end{equation}
Define an inner product for $\psi,\eta\in A^{p}$

\begin{equation}
 \langle\langle\psi, \eta\rangle\rangle=\int_{M}\langle\psi, \eta\rangle\Phi,
\end{equation}
which makes $A^{p}$ be a preHilbert space and let $L^{2}(A^{p})$ be $L^{2}$-completion. Recall the Hodge $*$ operator
$*: A^{p}\rightarrow A^{n-p}$ defined as follows:

\begin{equation}
 \langle\psi, \eta\rangle\Phi=\psi \wedge *\eta   ~ {\rm for}~ \psi,\eta\in A^{p}.
\end{equation}
Let $\Delta$ be the $d$-Laplacian and $d^{*}=c_{0}*d*$ be the adjoint operator of  $d$~($d^{*}: A^{p}\rightarrow A^{p-1}$). Let
$H^{p}=ker\Delta_{p}$ be the harmonic $p$-forms space. Recall the Hodge Theorem 

\begin{equation}
 A^{p}=H^{p}\oplus Im d\oplus Im d^{*}.
\end{equation}
Now we take the conformal rescaling metric $\tilde{g}=e^{2f}g^{TM}$ for some smooth function $f$ on $M$.
Then $\{e^{f}e^{1},e^{f}e^{2},\cdots,e^{f}e^{n}\}$ are the associated dual orthonormal basis about $e^{2f}g^{TM}$.
Let $\hat{*}$ be the Hodge star operator associated to the rescaling metric. By \cite{Ug1}, we have $\hat{*}=e^{(2p-n)f}*$.
In particular, when $2p=n$, the $*$ operator is conformal invariant, that is $\hat{*}=*$.
By the definition of the inner product, when $2p=n$, $L^{2}(A^{p})$ is conformally invariant. In the following, we always
assume $2p=n$. By the Hodge decomposition theorem, we have $A^{\frac{n}{2}}=Im\triangle (A^{\frac{n}{2}})\oplus H^{\frac{n}{2}}$
and $H^{\frac{n}{2}}=ker(d)\cap ker(d*)$. $F_{d}$ is the pseudodifferential operator of order $0$ acting on $A^{\frac{n}{2}}$, obtained
from the orthogonal projection $P$ on the image of $d$, by the relation $F^{g}_{d}=2P_{Imd}-Id$. From the Hodge decomposition theorem, it is easy to
see that $F^{g}_{d}$ preserves the finite dimensional space of harmonic forms $H^{p}$, and the $F^{g}_{d}$ restricted to the $H\ominus H^{p}$ is given
by
\begin{equation}
F^{g}_{d}=\frac{d d^*-d^*d}{dd^*+d^*d},
\end{equation}
both $H^{p}$ and $F^{g}_{d}$ are independent of the metric in the conformal class of $g$. Then we have
$(C^{\infty}(M),~L^2(A^\frac{n}{2}),$ $F^{g}_{d})$ is a conformally invariant Fredholm module (for definition, see \cite{GVF}) .

Next we recall the definition of the Wodzicki residue. Let $\overline{M}$ be a $k$-dimensional Riemannian manifold
and $P$ is a pseudodifferential operator acting on sections of a vector bundle $B$ over the manifold $\overline{M}$.
In a local coordinate system, denote by $\sigma_{-k}(P)$ the $(-k)$-order symbol, then the Wodzicki residue of $P$
(for details, see \cite{Wo} or \cite{GVF}) is defined as follows:
\begin{equation}
{\rm Wres}(P)=\int_{\overline{M}}\int_{|\xi|=1}{\rm Tr}(\sigma_{-k}(P)(x,\xi))\sigma(\xi)dx,
\end{equation}
where $\xi$ is the cotangent vector and $\sigma(\xi)$ is the canonical $(k-1)$-sphere volume. ${\rm Wres}(P)$ is a
{\rm trace} which is independent of the choice of the local coordinates on $\overline{M}$ and the local basis of $B$.
In particular, the Wodzicki residue does not depend on the choice of the metric. Let $g_{1}^{TM},~ g_{2}^{TM}$ are
the two different metrics on $TM$. Let $F^{g_{i}}_{d}=2P_{\rm Im d}-{\rm Id}~(i=1, 2)$, where $P_{\rm Im d}$ is
the projection from $A^{p}$ to $ \rm Im d$. As in [Co], we have on $A^{p}\ominus H^{p}$, for $i=1,2$
\begin{equation}
F^{g_{i}}_{d}=\frac{d d_{g_{i}}^*-d_{g_{i}}^*d}{dd_{g_{i}}^*+d_{g_{i}}^*d}.
\end{equation}
Thus $F^{g_{i}}_{d}~(i=1,2)$ is a conformally invariant $0$-order pseudodifferential bounded operator. Define the $n$-form
$\Omega^{g_{1},g_{2}}_{d}(f_1,f_2)$ by the following identity for $f_0, f_1, f_2\in C^{\infty}( M)$
\begin{equation}
Wres(f_0[F^{g_{1}}_{d},f_1][F^{g_{2}}_{d},f_2])
=\int_Mf_0\Omega^{g_{1},g_{1}}_{d}(f_1,f_2).
\end{equation}
Then, as in \cite{Co} or \cite{Ug1}, we have
\begin{thm}\label{th:33}
$\Omega^{g_{1},g_{2}}_{d}(f_1,f_2)$ is a unique defined by (2.8), symmetric and double conformally invariant
$n$-differential form. Furthermore, $ \int_Mf_0\Omega^{g_{1},g_{2}}_{d}(f_1,f_2)$ defines a Hochschild $2$-cocycle
on the algebra of smooth functions on $M$. In particular, when $g_{1}=g_{2} $, we get the Connes' conformal invariant.
\end{thm}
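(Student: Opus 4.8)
The plan is to establish the four assertions in turn, in each case reducing to properties of the Wodzicki residue functional $\mathrm{Wres}$ and of the operators $F^{g_i}_d$ already recorded in Section~2.

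First I would treat \emph{existence and uniqueness}. The right-hand side of (2.8) must hold for every $f_0\in C^\infty(M)$; since $\mathrm{Wres}$ is given by an integral over $S^*M$ of the $(-n)$-order symbol, the map $f_0\mapsto \mathrm{Wres}(f_0[F^{g_1}_d,f_1][F^{g_2}_d,f_2])$ is a distribution on $M$, and because $[F^{g_i}_d,f_i]$ is a pseudodifferential operator of order $-1$ (the principal symbol of $F^{g_i}_d$ commutes with multiplication by the scalar $f_i$, so the bracket lowers order), the composite $f_0[F^{g_1}_d,f_1][F^{g_2}_d,f_2]$ has order $-2$. Hence, as in \cite{Co} and \cite{Ug1}, the $(-n)$-order symbol depends on $f_0$ only through its value (not its derivatives) at each point, so the distribution is given by integration against a smooth $n$-form; that form is $\Omega^{g_1,g_2}_d(f_1,f_2)$, and it is unique because $f_0$ ranges over all of $C^\infty(M)$ and the pairing $\int_M f_0\,(\cdot)$ separates $n$-forms.

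Next, \emph{double conformal invariance}: replace $g_1$ by $e^{2h_1}g_1$ and $g_2$ by $e^{2h_2}g_2$. By the discussion preceding Theorem~\ref{th:33} (the formulas (2.5) and (2.7) for $F^{g_i}_d$ on $A^{n/2}\ominus H^{n/2}$, together with $\hat * = *$ when $2p=n$), each $F^{g_i}_d$ is unchanged within its conformal class; hence the operator $f_0[F^{e^{2h_1}g_1}_d,f_1][F^{e^{2h_2}g_2}_d,f_2]$ equals $f_0[F^{g_1}_d,f_1][F^{g_2}_d,f_2]$, so its Wodzicki residue, and therefore $\Omega^{g_1,g_2}_d(f_1,f_2)$, is literally the same $n$-form. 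For \emph{symmetry} I would use that $\mathrm{Wres}$ is a trace: $\mathrm{Wres}(AB)=\mathrm{Wres}(BA)$. Writing $F_i=F^{g_i}_d$ and using $[F_i,f_i]^* = -[F_i,f_i]$ up to the self-adjointness of $F_i$ on $L^2(A^{n/2})$ (together with $F_i^2=\mathrm{Id}$ modulo finite rank), one moves $f_0$ around the trace and commutes the two brackets modulo a term that is itself a commutator, hence of vanishing residue; this yields $\mathrm{Wres}(f_0[F_1,f_1][F_2,f_2]) = \mathrm{Wres}(f_0[F_2,f_2][F_1,f_1])$, which after the integration-by-parts that exchanges the roles of $f_1$ and $f_2$ gives $\Omega^{g_1,g_2}_d(f_1,f_2)=\Omega^{g_2,g_1}_d(f_2,f_1)$ as an identity of $n$-forms paired against $f_0$. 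Finally, for the \emph{Hochschild $2$-cocycle} property, let $\varphi(f_0,f_1,f_2)=\mathrm{Wres}(f_0[F_1,f_1][F_2,f_2])$ and compute the Hochschild coboundary $b\varphi(f_0,f_1,f_2,f_3)=\varphi(f_0f_1,f_2,f_3)-\varphi(f_0,f_1f_2,f_3)+\varphi(f_0,f_1,f_2f_3)-\varphi(f_3f_0,f_1,f_2)$; expanding each bracket by the Leibniz rule $[F_i,f_jf_k]=f_j[F_i,f_k]+[F_i,f_j]f_k$ and using the trace property of $\mathrm{Wres}$ to cancel the resulting terms in pairs shows $b\varphi=0$. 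The specialization $g_1=g_2=g$ recovers Connes' functional $\mathrm{Wres}(f_0[F^g_d,f_1][F^g_d,f_2])$, which is his conformal invariant.

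The main obstacle I anticipate is the symmetry claim: unlike in the single-metric case, the two brackets $[F^{g_1}_d,f_1]$ and $[F^{g_2}_d,f_2]$ involve genuinely different operators, so exchanging them is not automatic and one must check carefully that the correction terms produced by commuting them past each other and past $f_0$ are sums of commutators (whence annihilated by $\mathrm{Wres}$) together with operators of order $<-n$ (whence of vanishing $(-n)$-symbol). This requires tracking the order-by-order symbol expansion of the composites and using the order $-1$ bound on each $[F^{g_i}_d,f_i]$; the residual $F_i^2-\mathrm{Id}$ being finite rank is what makes these manipulations legitimate modulo the harmonic subspace $H^{n/2}$. The remaining points—existence/uniqueness, conformal invariance, the cocycle identity, and the $g_1=g_2$ specialization—are then routine consequences of the trace property and the Leibniz rule, exactly paralleling \cite{Co} and \cite{Ug1}.
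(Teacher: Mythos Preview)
Your cocycle argument is exactly the paper's: set $\Psi(f_0,f_1,f_2)=\mathrm{Wres}(f_0[F^{g_1}_d,f_1][F^{g_2}_d,f_2])$, expand $b\Psi$ via the Leibniz rule $[S,fh]=[S,f]h+f[S,h]$, and cancel the four terms pairwise using the trace property of $\mathrm{Wres}$. That computation is in fact the entire explicit content of the paper's proof; for uniqueness, existence as an $n$-form, and double conformal invariance the paper writes only ``as in \cite{Co} or \cite{Ug1}'', so your treatment of those points is more detailed than what the paper itself supplies.

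On symmetry your caution is well placed. The paper's proof asserts the strong form $\Omega^{g_1,g_2}_d(f_1,f_2)=\Omega^{g_1,g_2}_d(f_2,f_1)$ (metrics fixed) with no argument at all, whereas the trace/adjoint manipulation you outline naturally yields only $\Omega^{g_1,g_2}_d(f_1,f_2)=\Omega^{g_2,g_1}_d(f_2,f_1)$, swapping both metrics and functions. Passing from the latter to the former is exactly the commutator bookkeeping you flag as the main obstacle, and the paper does not carry it out. So on this one delicate step your proposal is at least as complete as the paper's own proof, and you have correctly identified where the work lies.
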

\begin{proof}
Let \begin{equation}
\Psi(f_0, f_1, f_2)=Wres(f_0[F^{g_{1}}_{d},f_1][F^{g_{2}}_{d},f_2]),
\end{equation}
for $f_0, f_1, f_2, f_3\in C^{\infty}( M),$
by $[S, ~fh]=[S,~f]h+f[S,~h]$, then
\begin{eqnarray}
(b\Psi)(f_0, f_1, f_2, f_3)&=& Wres(f_0f_1[S_{1},f_2][S_{2},f_3])- Wres(f_0[S_{1},f_1f_2][S_{2},f_3]) \nonumber\\
&&+ Wres(f_0[S_{1},f_1][S_{2},f_2f_3])-Wres(f_3f_0[S_{1},f_1][S_{2},f_2])=0,
\end{eqnarray}
and $\Omega^{g_{1},g_{2}}_{d}(f_1,f_2)=\Omega^{g_{1},g_{2}}_{d}(f_2,f_1)$, we get $ \int_Mf_0\Omega^{g_{1},g_{2}}_{d}(f_1,f_2)$
is a Hochschild $2$-cocycle on the algebra of smooth functions on $M$.
\end{proof}

\section{The Computation of the Double Conformal Invariant $\Omega^{g_{1},g_{2}}_{d}(f_1,f_2)$  }
\label{3}
Next we compute the conformal invariant $\Omega^{g_{1},g_{2}}_{d}(f_1,f_2)$. Similar to Lemmas 2.2 and 2.3
in \cite{Ug1}, we have

\begin{equation}
\Omega^{g_{1},g_{2}}_{d}(f_1,f_2)=\int_{|\xi|=1}{\rm tr}_{A^{p}}
[\sum\frac{1}{\alpha'!\alpha''!\beta!\delta!}D^{\beta}_x({f_1}) D^{\alpha''+\delta}_x({f_2})
\partial^{\alpha'+\alpha''+\beta}_{\xi}(\sigma^{F^{g_{1}}_{d}}_{-j})
\partial^{\delta}_{\xi}D^{\alpha'}_x(\sigma^{F^{g_{2}}_{d}}_{-k})]\sigma(\xi)d^nx ,
\end{equation}
where $A^{p}$ is the $p$-form space and $\sigma_{-j}^{F^{g_{i}}_{d}}(i=1,2)$
denotes the  $-j$ order symbol of $F^{g_{i}}_{d}(i=1,2)$;
$D^\beta_x=(-i)^{|\beta|}\partial^\beta_x$ and the sum is taken over $|\alpha'|+|\alpha''|+|\beta|+|\delta|+j+k=n;
|\beta|\geq1,|\delta|\geq1; \alpha',\alpha'',\beta,\delta \in {\bf Z}^{n}_+; j,k\in {\bf Z}_+.$
Let $\xi=\sum_{i=1}^{n}\xi_je^j$ be the cotangent vector in $T^{*}M$. Let $\sigma_L(P)$ denote the leading
symbol of the pseudodifferential operator $P$ and $\varepsilon(.)$ and $\iota(.)$ denote the exterior and interior
multiplications respectively. By Lemma 3.52 in \cite{Gi}, we have

\begin{lem}\label{le:31}
When $g_{i}~(i=1,2)$ is flat, we have for $\xi\neq 0$
\begin{equation}
\sigma^{F}_{-k}(x, \xi)=0 ~ {\rm for}~\forall~ k>0;
\end{equation}
\begin{equation}
\sigma(F^{g_{i}}_{d})=\sigma_L(F^{L,g_{i}}_{d})
=\frac{\varepsilon(\xi)\iota^{g_{i}}(\xi)-\iota^{g_{i}}(\xi)\varepsilon(\xi)}{|\xi|_{g_{i}}^{2}} ~(i=1,2).
\end{equation}
\end{lem}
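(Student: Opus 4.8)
The plan is to compute the symbol of $F^{g}_{d}$ in the flat case by writing $F^{g}_{d}=\dfrac{dd^{*}-d^{*}d}{dd^{*}+d^{*}d}$ on $A^{p}\ominus H^{p}$ and passing to leading symbols. First I would recall that on a flat manifold the Hodge Laplacian $\Delta=dd^{*}+d^{*}d$ acting on $p$-forms is a scalar operator whose full symbol is $|\xi|_{g}^{2}$ times the identity on $\Lambda^{p}T^{*}_{x}M$; in particular $\Delta$ has no lower-order symbol terms in the flat case, so $\Delta^{-1}$ has symbol $|\xi|_{g}^{-2}\,\mathrm{Id}$ exactly. Next, the numerator $dd^{*}-d^{*}d$ is a second-order operator whose leading symbol I would compute from the standard fact (Gilkey, Lemma 3.52) that $\sigma_{L}(d)(\xi)=i\,\varepsilon(\xi)$ and $\sigma_{L}(d^{*})(\xi)=-i\,\iota^{g}(\xi)$, giving $\sigma_{L}(dd^{*}-d^{*}d)(\xi)=\varepsilon(\xi)\iota^{g}(\xi)-\iota^{g}(\xi)\varepsilon(\xi)$. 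Dividing by the symbol of $\Delta$ yields the claimed leading symbol
\[
\sigma_{L}(F^{g_{i}}_{d})=\frac{\varepsilon(\xi)\iota^{g_{i}}(\xi)-\iota^{g_{i}}(\xi)\varepsilon(\xi)}{|\xi|_{g_{i}}^{2}}.
\]

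The second assertion, that all lower-order symbols $\sigma^{F}_{-k}(x,\xi)$ vanish for $k>0$, is where I would spend the most care. The point is that in the flat case $F^{g}_{d}$ is not merely a pseudodifferential operator with this leading symbol but is \emph{exactly equal} to a homogeneous operator of order $0$. I would argue this by noting $dd^{*}-d^{*}d$ is an honest differential operator whose full symbol is already homogeneous of degree $2$ (flatness kills the first- and zeroth-order parts, since in Euclidean coordinates $d$ and $d^{*}$ have constant coefficients), while $\Delta^{-1}$, being $|\xi|^{-2}\mathrm{Id}$ on the nose, is $(-2)$-homogeneous; hence the composition $F^{g}_{d}=(dd^{*}-d^{*}d)\Delta^{-1}$ has full symbol exactly $\dfrac{\varepsilon(\xi)\iota^{g}(\xi)-\iota^{g}(\xi)\varepsilon(\xi)}{|\xi|_{g}^{2}}$, which is homogeneous of degree $0$ with no remainder. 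Here one must be slightly careful that $\Delta^{-1}$ on $A^{p}\ominus H^{p}$ agrees, modulo smoothing, with the operator of multiplication by $|\xi|^{-2}$ on the symbol level; since smoothing operators contribute nothing to any $\sigma_{-k}$ with $k>0$ finite, and since there are no harmonic forms to worry about off the (finite-dimensional) space $H^{p}$, this is harmless.

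The main obstacle I anticipate is making the second part rigorous rather than formal: one has to be precise about what ``$\Delta^{-1}$'' means on the closed manifold (it is a parametrix, equal to the Green operator up to the finite-rank projection onto $H^{p}$), and to check that the symbol calculus composition formula introduces no genuine lower-order terms because every factor is exactly homogeneous. I would handle this by invoking the standard symbol expansion $\sigma(PQ)\sim\sum_{\alpha}\frac{1}{\alpha!}\partial_{\xi}^{\alpha}\sigma(P)\,D_{x}^{\alpha}\sigma(Q)$: since $\sigma(dd^{*}-d^{*}d)$ has constant coefficients in flat coordinates, all terms with $|\alpha|\geq 1$ drop out, leaving only the product of the two leading symbols, which is already $0$-homogeneous. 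An alternative and perhaps cleaner route is to cite Gilkey's Lemma 3.52 directly, which provides the leading symbol of $F_{d}$, together with the observation that in the flat case the operator coincides with its leading-symbol operator; either way, the conclusion $\sigma^{F}_{-k}=0$ for $k>0$ follows and the lemma is proved.
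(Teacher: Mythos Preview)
Your proposal is correct and in fact more detailed than what the paper provides: the paper gives no proof of this lemma at all, merely prefacing it with ``By Lemma 3.52 in \cite{Gi}, we have'' and stating the result. Your argument spells out exactly why the citation suffices---computing $\sigma_L(d)$, $\sigma_L(d^*)$, and $\sigma_L(\Delta)$ from Gilkey, and then observing that in flat coordinates all operators involved have constant coefficients so that the composition formula $\sigma(PQ)\sim\sum_\alpha \frac{1}{\alpha!}\partial_\xi^\alpha\sigma(P)\,D_x^\alpha\sigma(Q)$ collapses to the product of leading symbols, forcing $\sigma^F_{-k}=0$ for $k>0$. This is the natural elaboration of the paper's bare citation, and your care about $\Delta^{-1}$ being a parametrix up to a finite-rank (hence smoothing) projection is appropriate and does not appear in the paper.
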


We assume that $g_{i}~(i=1,2)$ is flat. Similarly to Lemma 6.1 in \cite{Ug1}, then we have

\begin{lem}\label{le:32}
The following equality holds
\begin{equation}
\Omega^{g_{1},g_{2}}_{\overline{\partial},{\rm flat}}(f_1,f_2)=\int_{|\xi|=1}{\rm tr}_{A^{\frac{n}{2}}}
[\sum\frac{1}{\alpha!\beta!\delta!}D^{\beta}_x({f_1})D^{\alpha+\delta}_x({f_2})
\partial^{\alpha+\beta}_{\xi}(\sigma_L(F^{g_{1}}_{d}))
\partial^{\delta}_{\xi}(\sigma_L(F^{g_{2}}_{d}))]d^{n-1}\xi dx,
\end{equation}
where the sum is taken over $|\alpha|+|\beta|+|\delta|=n; ~1\leq |\beta|; ~1\leq |\delta|.$
\end{lem}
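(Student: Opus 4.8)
The plan is to start from the general symbolic formula \eqref{} for $\Omega^{g_{1},g_{2}}_{d}(f_1,f_2)$ displayed just before Lemma~\ref{le:31} (equation (3.1) in the excerpt), and then feed in the flatness assumption via Lemma~\ref{le:31}. The key simplification is that, in the flat case, $\sigma^{F^{g_{i}}_{d}}_{-j}$ vanishes for every $j>0$, so in the sum over $|\alpha'|+|\alpha''|+|\beta|+|\delta|+j+k=n$ only the terms with $j=k=0$ survive. Once $j=k=0$, the symbol $\sigma^{F^{g_{i}}_{d}}_{0}$ is exactly the leading symbol $\sigma_L(F^{g_{i}}_{d})$ given explicitly in \eqref{} of Lemma~\ref{le:31}, which is a smooth function of $\xi$ away from $\xi=0$ and is \emph{homogeneous of degree $0$} in $\xi$. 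This is what lets us trade the $(n-1)$-sphere integral against $|\xi|=1$ with measure $\sigma(\xi)$ for the notation $d^{n-1}\xi\, dx$ used in the statement, and it is why no lower-order symbol contributions appear.

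Concretely, I would first set $j=k=0$ in (3.1), so the constraint $|\alpha'|+|\alpha''|+|\beta|+|\delta|+j+k=n$ becomes $|\alpha'|+|\alpha''|+|\beta|+|\delta|=n$ with $|\beta|\ge 1,\ |\delta|\ge 1$. Next I would recombine the multi-indices: in the flat situation $D^{\alpha'}_x\bigl(\sigma_L(F^{g_{2}}_{d})\bigr)$ can only produce nontrivial $x$-derivatives if $g_2$ is non-constant, but since we allow the metrics to be arbitrary flat metrics the cleanest route is to absorb $\alpha'$ into the remaining indices. Writing $\alpha := \alpha'+\alpha''$, the $\xi$-derivative landing on the first symbol becomes $\partial^{\alpha+\beta}_\xi$, the mixed $x$-derivative of $f_2$ becomes $D^{\alpha+\delta}_x(f_2)$ after collecting the $D^{\alpha''+\delta}_x$ and $D^{\alpha'}_x$ pieces through the Leibniz rule, and the combinatorial prefactor $\tfrac{1}{\alpha'!\alpha''!\beta!\delta!}$ collapses to $\tfrac{1}{\alpha!\beta!\delta!}$ by the Vandermonde identity $\sum_{\alpha'+\alpha''=\alpha}\binom{\alpha}{\alpha'}=2^{|\alpha|}$—wait, more carefully, the identity one needs is $\sum_{\alpha'+\alpha''=\alpha}\tfrac{1}{\alpha'!\,\alpha''!}=\tfrac{2^{|\alpha|}}{\alpha!}$; this factor of $2^{|\alpha|}$ is exactly what is silently normalized, and I would check against Lemma~6.1 of \cite{Ug1} that the conventions match so the stated formula is reproduced verbatim. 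The constraint then reads $|\alpha|+|\beta|+|\delta|=n$ with $1\le|\beta|$ and $1\le|\delta|$, matching the statement, and the trace is taken on $A^{\frac{n}{2}}$ since $2p=n$.

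The main obstacle I anticipate is purely bookkeeping: correctly executing the Leibniz expansion that merges the two families of $x$-derivatives $D^{\alpha''+\delta}_x(f_2)$ and $D^{\alpha'}_x(\sigma_L(F^{g_{2}}_{d}))$ into a single $D^{\alpha+\delta}_x(f_2)$ acting only on $f_2$, and tracking the resulting multinomial coefficients so that the prefactor ends up as $\tfrac{1}{\alpha!\beta!\delta!}$ exactly as written. One has to be careful that, because we are in the flat case and because of the special structure of $\sigma_L(F^{g_{i}}_{d})$, the $x$-derivatives falling on the second symbol can either be reorganized away or are harmless; the analogous reorganization is carried out in the proof of Lemma~6.1 of \cite{Ug1}, so I would follow that argument line by line, only substituting the $d$-operator leading symbol from Lemma~\ref{le:31} for the $\bar\partial$-operator symbol used there. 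A secondary point to verify is notational consistency of the left-hand side: the statement writes $\Omega^{g_{1},g_{2}}_{\overline{\partial},{\rm flat}}$, which should read $\Omega^{g_{1},g_{2}}_{d,{\rm flat}}$ in the real-manifold section; I would note this and treat it as the flat restriction of $\Omega^{g_{1},g_{2}}_{d}(f_1,f_2)$ defined in (2.8). With these conventions fixed, the proof is a direct specialization of (3.1) using Lemma~\ref{le:31}, and nothing deeper is required.
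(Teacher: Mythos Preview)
Your overall plan matches the paper's: the paper gives no detailed proof and simply says the result follows as in Lemma~6.1 of \cite{Ug1}, which is exactly the reference you invoke. The first step you outline---using Lemma~\ref{le:31} to force $j=k=0$ in (3.1)---is correct and is the heart of the matter.

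However, your handling of the multi-indices $\alpha',\alpha''$ is muddled and, as written, wrong. You propose to set $\alpha:=\alpha'+\alpha''$ and then merge $D^{\alpha''+\delta}_x(f_2)$ with $D^{\alpha'}_x(\sigma_L(F^{g_2}_d))$ via a Leibniz/Vandermonde identity so that all $x$-derivatives land on $f_2$; you even compute a spurious factor $2^{|\alpha|}$. No such reorganization is needed or valid. The correct observation is much simpler: in the flat case the leading symbol $\sigma_L(F^{g_2}_d)(x,\xi)$ given in Lemma~\ref{le:31} is \emph{independent of $x$} (the metric coefficients are constant), so $D^{\alpha'}_x\bigl(\sigma_L(F^{g_2}_d)\bigr)=0$ whenever $\alpha'\neq 0$. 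Hence only the terms with $\alpha'=0$ survive in (3.1), and one simply relabels $\alpha'':=\alpha$. The prefactor $\tfrac{1}{\alpha'!\alpha''!\beta!\delta!}$ then becomes $\tfrac{1}{\alpha!\beta!\delta!}$ with no combinatorial identity required, and the constraint collapses to $|\alpha|+|\beta|+|\delta|=n$. Your remark that ``the $x$-derivatives falling on the second symbol can either be reorganized away or are harmless'' is pointing in the right direction; the precise mechanism is that they vanish outright. Once you replace the Leibniz/Vandermonde detour with this one-line observation, the argument is complete. Your note about the typo $\Omega^{g_1,g_2}_{\overline{\partial},{\rm flat}}$ versus $\Omega^{g_1,g_2}_{d,{\rm flat}}$ is well taken.
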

Let
\begin{equation}
\psi(\xi,\eta)={\rm Tr}_{A^{\frac{n}{2}}}[\sigma_L(F^{g_{1}}_{d})(x,\xi)\sigma_L(F^{g_{2}}_{d})(x,\eta)];
\end{equation}

\begin{equation}
T_{n}'\psi(\xi,\eta,u,v)=\sum\frac{u^{\beta}}{\beta!}\frac{v^{\delta}}{\delta!}{\rm tr}[\partial^\beta_\xi
(\sigma_L(F^{g_{1}}_{d})(\xi))\partial^\delta_\eta(\sigma_L(F^{g_{2}}_{d})(\eta))],
\end{equation}
where the sum is taken over $|\beta|+|\delta|=n,~|\beta|\geq 1,~|\delta|\geq 1$. $T_{n}'(\xi,\eta,u,v)$ is the
term of order $n$ in the Taylor expansion of $\psi(\xi+u,\eta+v)$ at $u=v=0$ minus the terms with only powers
 of $u$ or only powers of $v$. As in Section 4 in \cite{Ug1}, we have

\begin{thm}\label{th:33}
\begin{equation}
\Omega^{g_{1},g_{2}}_{d, {\rm flat}}(f_1,f_2)=(\sum A_{a, b}(D^a_xf_1)(D^b_xf_2))dx,
\end{equation}
where
\begin{equation}
\sum A_{a, b}u^av^b=\int_{|\xi|=1}(T_{n}'\psi(\xi,\xi,u+v,v)-T_{n}'\psi(\xi,\xi,v,v))\sigma(\xi).
\end{equation}
\end{thm}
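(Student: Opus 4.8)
The plan is to start from the Wodzicki-residue formula (2.12), which after specializing to the flat case and using Lemma~\ref{le:31} (so that only the leading symbol $\sigma_L(F^{g_i}_d)$ contributes and no lower-order symbols appear), reduces to the expansion recorded in Lemma~\ref{le:32}. The computation then proceeds exactly along the lines of Section~4 of \cite{Ug1}: first recognize the $\xi$-derivatives of products $D^\beta_x(f_1) D^{\alpha+\delta}_x(f_2)$ appearing in (3.4) as Taylor coefficients, then organize the whole sum by means of a generating function in auxiliary variables $u,v\in\R^n$.

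The key steps I would carry out, in order, are as follows. First, introduce $\psi(\xi,\eta)$ as in (3.5) and note that the bracket inside the integral in (3.4) is, after replacing $\eta$ by $\xi$ at the end, a derivative-of-product structure: the term $\tfrac1{\alpha!}D^\alpha_x(f_2)\,\partial^\alpha_\xi$ acts like a shift $\xi\mapsto\xi+$(the $f_2$-Taylor variable), which is why in (3.9) one evaluates $T_n'\psi$ at arguments $(\xi,\xi,u+v,v)$: the first slot's shift variable is $u+v$ because it receives the $\beta$-derivative of $f_1$ (contributing $u$) \emph{and} the $\alpha$-derivative of $f_2$ (contributing $v$, from the combined $D^{\alpha+\delta}_x(f_2)$), while the second slot receives only $\delta$-derivatives of $f_2$ (contributing $v$). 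Second, subtract the term $T_n'\psi(\xi,\xi,v,v)$: this removes precisely the contributions where the $f_1$-Taylor variable $u$ does not appear, which corresponds to the constraint $|\beta|\geq1$ (i.e.\ $[F^{g_1}_d,f_1]$ genuinely being a commutator, killing the pure-$f_2$ terms), mirroring the ``minus the terms with only powers of $u$ or only powers of $v$'' convention built into $T_n'$. Third, integrate over the unit sphere $|\xi|=1$ with respect to $\sigma(\xi)$, collect the coefficient of $u^a v^b$, and \emph{define} $A_{a,b}$ by (3.8); then reading off the $d^nx$ factor from (3.4) gives (3.7). Uniqueness, symmetry and double conformal invariance of the form are not re-proved here — they are already guaranteed by Theorem~\ref{th:33}; (3.7) is just its explicit flat-case evaluation.

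I would be careful about two bookkeeping points. One is the precise correspondence between the multi-index sum in Lemma~\ref{le:32} (with $|\alpha|+|\beta|+|\delta|=n$, $|\beta|\geq1$, $|\delta|\geq1$) and the two-variable generating function: the substitution $\eta\to\xi$ must be performed \emph{after} differentiating, and the $\alpha$-summation is what produces the shift of the first argument of $\psi$ by the $f_2$-variable, so one must check that the combinatorial factors $\tfrac1{\alpha!\beta!\delta!}$ assemble correctly into the product of exponential-type Taylor series defining $T_n'$. The other is that $T_n'$ keeps only the order-$n$ homogeneous part and discards pure-$u$ and pure-$v$ monomials; one must verify that discarding pure-$v$ monomials is exactly implemented by the subtraction in (3.9) and that discarding pure-$u$ monomials is automatic because $|\delta|\geq1$ forces the $f_2$-variable $v$ to appear.

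The main obstacle I expect is this matching of the combinatorics: showing rigorously that the sum over $(\alpha,\beta,\delta)$ in (3.4), after the diagonal substitution $\eta=\xi$, equals $\int_{|\xi|=1}\bigl(T_n'\psi(\xi,\xi,u+v,v)-T_n'\psi(\xi,\xi,v,v)\bigr)\sigma(\xi)$ read off in the variables $u$ (dual to $D^a_x f_1$) and $v$ (dual to $D^b_x f_2$). This is a multivariable Taylor-expansion/generating-function identity with a two-step shift, and keeping the roles of $u$ and $v$ straight — in particular that the first slot is shifted by $u+v$ and not by $u$ alone — is the delicate part; everything after that is formal. Since this is precisely the manipulation carried out in Section~4 of \cite{Ug1} for the single-metric case, I would follow that argument verbatim, replacing $\sigma_L(F^g_d)(\xi)$ by $\sigma_L(F^{g_1}_d)(\xi)$ in the first slot and by $\sigma_L(F^{g_2}_d)(\eta)$ in the second, and then specializing $\eta=\xi$ only at the end.
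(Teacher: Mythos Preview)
Your proposal is correct and follows essentially the same approach as the paper: the paper itself offers no independent proof of this theorem, merely citing ``As in Section~4 in \cite{Ug1}'' before the statement, which is precisely the route you outline (start from Lemma~\ref{le:32}, reinterpret the multi-index sum as Taylor coefficients of $\psi(\xi+u,\eta+v)$, and match the $|\beta|\geq1$, $|\delta|\geq1$ constraints to the subtraction defining $T_n'$). Your bookkeeping remarks about the shift $u+v$ in the first slot and the subtraction of $T_n'\psi(\xi,\xi,v,v)$ are exactly the two-metric adaptations needed, and nothing further is required.
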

By Theorem 3.3, to obtain an explicit expression for $\Omega^{g_{1},g_{2}}_{d, {\rm flat}}$, it is
necessary to study ${\rm Tr}_{A^{\frac{n}{2}}}[\sigma_L(F^{g_{1}}_{d})(\xi)$ $\sigma_L(F^{g_{2}}_{d})(\eta)]$
for $\xi$ and $\eta$ not zero in $T^{*}_xM$.

\begin{thm}\label{th:34}
With $\sigma_L(F^{g_{1}}_{d})(\xi)\sigma_L(F^{g_{2}}_{d})(\eta)$ acting on m-forms, we have

For $m=1$
\begin{equation}
{\rm Tr}_{A^{1}}[\sigma_L(F^{g_{1}}_{d})(\xi)\sigma_L(F^{g_{2}}_{d})(\eta)]
=4\frac{\langle\xi, \eta\rangle_{g_{1}}\langle\xi, \eta\rangle_{g_{2}}}{|\xi|_{g_{1}}^2|\eta|_{g_{2}}^2}-3C^0_n.
\end{equation}

For $m\geq 2$
\begin{equation}
{\rm Tr}_{A^{m}}[\sigma_L(F^{g_{1}}_{d})(\xi)\sigma_L(F^{g_{2}}_{d})(\eta)]
=a_{n,m}\frac{\langle\xi, \eta\rangle_{g_{1}}\langle\xi, \eta\rangle_{g_{2}}}{|\xi|^2|\eta|^2}+b_{n,m},
\end{equation}
where $\langle\xi, \eta\rangle_{g_{i}}(i=1,2)$ represents the inner product $g_{i}(\xi, \eta)~(i=1,2)$
and

\begin{equation}
b_{n,m}=C_{n-2}^{m-2}+C_{n-2}^{m}-2C_{n-2}^{m-1}; ~a_{n,m}=C_n^m-C_{n-2}^{m-2}-C_{n-2}^{m}+2C_{n-2}^{m-1},
\end{equation}
where $C_n^m$ is a combinatorial number.
\end{thm}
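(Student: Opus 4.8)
The plan is to reduce everything to the explicit leading symbol of Lemma~\ref{le:31} and then to a handful of universal traces of products of exterior and interior multiplications on $A^m$. First, for a nonzero covector $\zeta$, Lemma~\ref{le:31} gives $\sigma_L(F^{g_i}_d)(\zeta)=|\zeta|^{-2}_{g_i}\bigl(\varepsilon(\zeta)\iota^{g_i}(\zeta)-\iota^{g_i}(\zeta)\varepsilon(\zeta)\bigr)=2P_i-\mathrm{Id}$, where $P_i:=|\zeta|^{-2}_{g_i}\varepsilon(\zeta)\iota^{g_i}(\zeta)$. Using $\varepsilon(\zeta)^2=(\iota^{g_i}(\zeta))^2=0$ and the anticommutation relation $\varepsilon(\zeta)\iota^{g_i}(\zeta)+\iota^{g_i}(\zeta)\varepsilon(\zeta)=|\zeta|^2_{g_i}$ one checks $P_i^2=P_i$, so each $P_i$ is a projection; in a $g_i$-orthonormal coframe $e^1,\dots,e^n$ with $\zeta\parallel e^1$ it becomes $P_i=\varepsilon(e^1)\iota(e_1)$, the projection of $A^m$ onto $e^1\wedge\Lambda^{m-1}(e^2,\dots,e^n)$, so $\mathrm{Tr}_{A^m}P_i=C_{n-1}^{m-1}$. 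Hence everything reduces to the cross term via
\[
\mathrm{Tr}_{A^m}\bigl[\sigma_L(F^{g_1}_d)(\xi)\,\sigma_L(F^{g_2}_d)(\eta)\bigr]=4\,\mathrm{Tr}_{A^m}(P_1P_2)-4C_{n-1}^{m-1}+C_n^m .
\]

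Next I would compute the cross term. Write $\mathrm{Tr}_{A^m}(P_1P_2)=|\xi|^{-2}_{g_1}|\eta|^{-2}_{g_2}\,\mathrm{Tr}_{A^m}\bigl(\varepsilon(\xi)\iota^{g_1}(\xi)\varepsilon(\eta)\iota^{g_2}(\eta)\bigr)$ and commute the middle pair by $\iota^{g_1}(\xi)\varepsilon(\eta)=\langle\xi,\eta\rangle_{g_1}-\varepsilon(\eta)\iota^{g_1}(\xi)$, which splits the trace into $\langle\xi,\eta\rangle_{g_1}\,\mathrm{Tr}_{A^m}\bigl(\varepsilon(\xi)\iota^{g_2}(\eta)\bigr)-\mathrm{Tr}_{A^m}\bigl(\varepsilon(\xi)\varepsilon(\eta)\iota^{g_1}(\xi)\iota^{g_2}(\eta)\bigr)$. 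Now I would invoke two universal identities, valid for arbitrary covectors $a,b$ and vectors $X,Y$: since the trace on $A^m$ of any endomorphism assembled from $\varepsilon$'s and $\iota$'s is $GL(n)$-invariant in its arguments, $\mathrm{Tr}_{A^m}(\varepsilon(a)\iota(X))=C_{n-1}^{m-1}\langle a,X\rangle$ (the unique invariant bilinear pairing, normalized by the test value $a=e^1,X=e_1$), and $\mathrm{Tr}_{A^m}(\varepsilon(a)\varepsilon(b)\iota(X)\iota(Y))=-C_{n-2}^{m-2}\bigl(\langle a,X\rangle\langle b,Y\rangle-\langle a,Y\rangle\langle b,X\rangle\bigr)$ (the unique invariant antisymmetric in $a\leftrightarrow b$ and in $X\leftrightarrow Y$, normalized by $a=e^1,b=e^2,X=e_1,Y=e_2$). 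If one prefers to avoid invariant theory, both identities follow by direct diagonalization of $\varepsilon(e^i)\iota(e_j)$ and $\varepsilon(e^i)\varepsilon(e^j)\iota(e_k)\iota(e_l)$ on a basis of $A^m$.

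Substituting $a=\xi$, $b=\eta$, $X=\xi^{\sharp_1}$, $Y=\eta^{\sharp_2}$ (the $g_1$- and $g_2$-metric duals) and reading off $\langle\eta,\xi^{\sharp_1}\rangle=\langle\xi,\eta\rangle_{g_1}$, $\langle\xi,\eta^{\sharp_2}\rangle=\langle\xi,\eta\rangle_{g_2}$, $\langle\xi,\xi^{\sharp_1}\rangle=|\xi|^2_{g_1}$, $\langle\eta,\eta^{\sharp_2}\rangle=|\eta|^2_{g_2}$, and using $C_{n-1}^{m-1}-C_{n-2}^{m-2}=C_{n-2}^{m-1}$, one gets $\mathrm{Tr}_{A^m}(P_1P_2)=C_{n-2}^{m-1}\dfrac{\langle\xi,\eta\rangle_{g_1}\langle\xi,\eta\rangle_{g_2}}{|\xi|^2_{g_1}|\eta|^2_{g_2}}+C_{n-2}^{m-2}$. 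Plugging this into the displayed expansion yields, for $m\ge 2$,
\[
\mathrm{Tr}_{A^m}\bigl[\sigma_L(F^{g_1}_d)(\xi)\sigma_L(F^{g_2}_d)(\eta)\bigr]=4C_{n-2}^{m-1}\,\frac{\langle\xi,\eta\rangle_{g_1}\langle\xi,\eta\rangle_{g_2}}{|\xi|^2_{g_1}|\eta|^2_{g_2}}+\bigl(C_n^m-4C_{n-1}^{m-1}+4C_{n-2}^{m-2}\bigr),
\]
and two applications of Pascal's rule, $C_n^m=C_{n-2}^m+2C_{n-2}^{m-1}+C_{n-2}^{m-2}$ and $C_{n-1}^{m-1}=C_{n-2}^{m-1}+C_{n-2}^{m-2}$, identify the coefficient with $a_{n,m}$ and the constant with $b_{n,m}$. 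For $m=1$ one has $C_{n-2}^{m-2}=0$: the $\varepsilon\varepsilon\iota\iota$-contribution is absent (two interior multiplications annihilate a $1$-form), and the same bookkeeping gives the $m=1$ line.

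The argument has no conceptual obstacle; the points requiring care are (i) keeping the two musical isomorphisms $\sharp_1,\sharp_2$ straight so that each scalar pairing is correctly recognized as a $g_1$- or $g_2$-inner product or norm, and (ii) pinning down the two universal constants $C_{n-1}^{m-1}$ and $-C_{n-2}^{m-2}$ (this is the one genuine computation, done either by invariant theory plus one test coframe or by direct diagonalization). The remaining Pascal-rule rearrangements into $a_{n,m},b_{n,m}$ and the degenerate low-$m$ cases are routine.
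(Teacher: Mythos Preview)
Your argument is correct and cleaner than the paper's. The paper proceeds in two stages: first it sets up a recursion (Lemma~3.5 together with the identities (3.17)--(3.19)) to show that the trace must have the form $a_{n,m}\,\dfrac{\langle\xi,\eta\rangle_{g_1}\langle\xi,\eta\rangle_{g_2}}{|\xi|_{g_1}^2|\eta|_{g_2}^2}+b_{n,m}$, and then it \emph{evaluates} the two constants by plugging in special choices $\xi=\eta=e^{j}$ (to get $a_{n,m}+b_{n,m}=C_n^m$) and $\xi,\eta$ two distinct $g_2$-orthonormal basis vectors (to isolate $b_{n,m}$), counting how many basis $m$-forms contain both, neither, or exactly one of $\xi,\eta$. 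Your route bypasses both the recursion and the case-by-case counting: writing $\sigma_L(F^{g_i}_d)=2P_i-\mathrm{Id}$ with $P_i$ a rank-$C_{n-1}^{m-1}$ projection reduces everything to $\mathrm{Tr}_{A^m}(P_1P_2)$, and the two $GL(n)$-invariant trace identities $\mathrm{Tr}_{A^m}(\varepsilon(a)\iota(X))=C_{n-1}^{m-1}\langle a,X\rangle$ and $\mathrm{Tr}_{A^m}(\varepsilon(a)\varepsilon(b)\iota(X)\iota(Y))=-C_{n-2}^{m-2}\bigl(\langle a,X\rangle\langle b,Y\rangle-\langle a,Y\rangle\langle b,X\rangle\bigr)$ give the constants in closed form, with Pascal's rule doing the final matching. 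This buys you a shorter argument and the constants fall out directly rather than via a combinatorial count; the paper's approach, on the other hand, makes the structure of the recursion explicit, which is reused in Section~6 for the $\overline\partial$-operator.

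One remark: your computation gives the constant term $n-4$ for $m=1$, whereas the theorem as stated has $-3C_n^0=-3$. Your value is the correct one: the general formula $b_{n,m}=C_{n-2}^{m-2}+C_{n-2}^{m}-2C_{n-2}^{m-1}$ specializes to $n-4$ at $m=1$, and the paper's own check (3.29) with $\xi=\eta=e^{j_1}$ gives trace $C_n^1=n$, consistent with $4+(n-4)$ but not with $4-3$. The discrepancy is a typo in the statement and is invisible downstream since only derivatives of the trace enter $\Omega^{g_1,g_2}_d$.
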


\begin{lem}\label{le:35}
With $\sigma_L(F^{g_{1}}_{d})(\xi)\sigma_L(F^{g_{2}}_{d})(\eta)$ acting on m-forms, we have
\begin{equation}
\rm tr(\iota^{g_{1}}_{m+1}(\eta)\varepsilon_{m}(\xi))=\langle\xi,\eta\rangle_{g_{1}}A_{n,m},
\end{equation}
where $A_{n,m}=C_n^m-C_n^{m-1}+\cdots +(-1)^m C_n^0$ and $\varepsilon_{m} $ denotes the operator $ \varepsilon $ acting
on the space of $m$ forms.
\end{lem}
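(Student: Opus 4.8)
The plan is to reduce the computation to a one-step recursion in the form degree $m$, using the basic anticommutation relation between interior and exterior multiplication together with the cyclicity of the trace.

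First I would record, on the space $A^m$ of $m$-forms, the identity
\begin{equation}
\iota^{g_1}_{m+1}(\eta)\,\varepsilon_m(\xi)\;+\;\varepsilon_{m-1}(\xi)\,\iota^{g_1}_m(\eta)\;=\;\langle\xi,\eta\rangle_{g_1}\,{\rm Id}_{A^m},
\end{equation}
which is just the restriction to degree $m$ of the derivation rule $\iota_Y(\xi\wedge\omega)=\langle\xi,Y\rangle\,\omega-\xi\wedge\iota_Y\omega$ applied with $Y$ equal to the $g_1$-dual vector of $\eta$; note that only $g_1$ enters, so $\langle\xi,\eta\rangle_{g_1}$ (and not $\langle\xi,\eta\rangle_{g_2}$) appears on the right. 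Taking the trace over $A^m$ yields
\begin{equation}
{\rm tr}_{A^m}\big(\iota^{g_1}_{m+1}(\eta)\varepsilon_m(\xi)\big)+{\rm tr}_{A^m}\big(\varepsilon_{m-1}(\xi)\iota^{g_1}_m(\eta)\big)=\langle\xi,\eta\rangle_{g_1}\,C_n^m .
\end{equation}

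Next I would use the cyclic invariance of the trace: since $\iota^{g_1}_m(\eta)\colon A^m\to A^{m-1}$ and $\varepsilon_{m-1}(\xi)\colon A^{m-1}\to A^m$, one has ${\rm tr}_{A^m}\big(\varepsilon_{m-1}(\xi)\iota^{g_1}_m(\eta)\big)={\rm tr}_{A^{m-1}}\big(\iota^{g_1}_m(\eta)\varepsilon_{m-1}(\xi)\big)$. Writing $a_m:={\rm tr}_{A^m}\big(\iota^{g_1}_{m+1}(\eta)\varepsilon_m(\xi)\big)$, the previous display becomes the recursion $a_m+a_{m-1}=\langle\xi,\eta\rangle_{g_1}C_n^m$, with base data $a_{-1}=0$ (there are no $(-1)$-forms) and $a_0={\rm tr}_{A^0}\big(\iota^{g_1}_1(\eta)\varepsilon_0(\xi)\big)=\langle\xi,\eta\rangle_{g_1}$. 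Solving by telescoping gives $a_m=\langle\xi,\eta\rangle_{g_1}\sum_{j=0}^m(-1)^{m-j}C_n^j=\langle\xi,\eta\rangle_{g_1}\big(C_n^m-C_n^{m-1}+\cdots+(-1)^m C_n^0\big)=\langle\xi,\eta\rangle_{g_1}A_{n,m}$, which is the assertion.

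I do not expect any real obstacle here. The only point requiring care is the bookkeeping of the degree labels, so that the anticommutator is formed between the correct graded pieces $A^{m-1},A^m,A^{m+1}$, and tracking that the metric appearing in the contraction is indeed $g_1$; everything else is a routine induction. If it is convenient later, one may also record the closed form $A_{n,m}=C_{n-1}^m$, which follows from the standard binomial identity $\sum_{j=0}^m(-1)^j C_n^j=(-1)^m C_{n-1}^m$, but it is not needed for the statement as given.
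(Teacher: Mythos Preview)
Your proof is correct and follows essentially the same route as the paper: the anticommutation relation $\varepsilon_{m-1}(\xi)\iota^{g_1}_m(\eta)+\iota^{g_1}_{m+1}(\eta)\varepsilon_m(\xi)=\langle\xi,\eta\rangle_{g_1}I_m$, trace plus cyclicity to obtain the recursion $a_m+a_{m-1}=\langle\xi,\eta\rangle_{g_1}C_n^m$, and the base case $a_0=\langle\xi,\eta\rangle_{g_1}$. Your write-up is in fact somewhat more explicit than the paper's (you spell out the cyclicity step and the telescoping), and the closed form $A_{n,m}=C_{n-1}^m$ you note at the end is a correct side remark.
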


\begin{proof}
By using the {\rm trace} property and the relation
\begin{equation}
\varepsilon_{m-1}(\xi)\iota^{g_{1}}_{m}(\eta)+\iota^{g_{1}}_{m+1}(\eta)\varepsilon_{m}
(\xi)=\langle\xi,\eta\rangle_{g_{1}}I_{m},
\end{equation}
where $I_{m}$ is the identity on $A^{m}$-forms, we get
\begin{equation}
{\rm tr}[\iota^{g_{1}}_{m}(\eta)\varepsilon_{m-1}(\xi)]+{\rm tr}[\iota^{g_{1}}_{m+1}(\eta)\varepsilon_{m}(\xi)]=\langle\xi,\eta\rangle_{g_{1}}C_n^m,
\end{equation}
when $m=0$, we have
${\rm tr}[\iota^{g_{1}}_{1}(\eta)\varepsilon_{0}(\xi)]=\langle\xi,\eta\rangle_{g_{1}}I_{0}$.
Then by (3.14), we can prove this lemma.
\end{proof}

\noindent{\bf Proof of Theorem 3.4}
\begin{proof}
 Using  Lemma 3.5 and (3.13) we deduce
\begin{eqnarray}
&&{\rm trace}_{A^{m}}[\sigma_L(F^{g_{1}}_{d})(x, \xi)\sigma_L(F^{g_{2}}_{d})(x, \eta)]\nonumber\\
&=&|\xi|_{g_{1}}^{-2}|\eta|_{g_{2}}^{-2} {\rm trace}_{A^{m}}[(2\varepsilon_{m-1}(\xi)\iota^{g_{1}}_{m}(\xi)-|\xi|_{g_{1}}^{-2}I_{m})
(2\varepsilon_{m-1}(\eta)\iota^{g_{2}}_{m}(\eta)-|\eta|_{g_{2}}^{-2}I_{m})] \nonumber\\
&=&4|\xi|_{g_{1}}^{-2}|\eta|_{g_{2}}^{-2}{\rm trace}_{A^{m}}(\varepsilon_{m-1}(\xi)\iota^{g_{1}}_{m}(\xi)\varepsilon_{m-1}(\eta)\iota^{g_{2}}_{m}(\eta))
-4A_{n,m-1}+C_{n}^{m},
\end{eqnarray}
when $m=1$ and by Lemma 3.5, we can prove (3.9).

For $m\geq 1$ and $\xi_1,\xi_2,\eta_1,\eta_1\in A_{1}$, let

\begin{equation}
a_{m}(\xi_1,\xi_2,\eta_1,\eta_2)={\rm trace}_{A^{m}}[\varepsilon_{m-1}(\xi_1)\iota^{g_{1}}_{m}(\xi_2)
\varepsilon_{m-1}(\eta_1)\iota^{g_{2}}_{m}(\eta_2)],
\end{equation}
we have the relation
\begin{eqnarray}
&&a_{m}(\xi_1,\xi_2,\eta_1,\eta_2) \nonumber\\
&=&{\rm trace}_{A^{m}}\{[\langle\xi_1,\xi_2\rangle_{g_{1}}I_{m}-\iota^{g_{1}}_{m+1}(\xi_2)\varepsilon_{m}(\xi_1)]
[\langle\eta_1,\eta_2\rangle_{g_{2}}I_{m}-\iota^{g_{2}}_{m+1}(\eta_2)\varepsilon_{m}(\eta_1)]\} \nonumber\\
&=&\langle\xi_1,\xi_2\rangle_{g_{1}}\langle\eta_1,\eta_2\rangle_{g_{2}}(C_n^m-2A_{n,m})+{\rm trace}_{A^{m}}[\varepsilon_{m}(\eta_1)\iota^{g_{1}}_{m+1}(\xi_2)
\varepsilon_{m}(\xi_1)\iota^{g_{2}}_{m+1}(\eta_2)],
\end{eqnarray}
which implies
\begin{equation}
a_{m+1}(\eta_1,\xi_2,\xi_1,\eta_2)=a_{m}(\xi_1,\xi_2,\eta_1,\eta_2)+\langle\xi_1,\xi_2\rangle_{g_{1}}\langle\eta_1,\eta_2\rangle_{g_{2}}(2A_{n,m}-C_n^m),
\end{equation}
and
\begin{equation}
a_{1}(\xi_1,\xi_2,\eta_1,\eta_2)=\langle\xi_1,\eta_2\rangle_{g_{2}}\langle\xi_2,\eta_1\rangle_{g_{1}}.
\end{equation}

Thus because of the relation
\begin{equation}
{\rm trace}_{A^{m}}[\sigma_L(F^{g_{1}}_{d})(x, \xi)\sigma_L(F^{g_{2}}_{d})(x, \eta)]
=4a_{m}(\xi,\xi,\eta,\eta)|\xi|_{g_{1}}^{-2}|\eta|_{g_{2}}^{-2}-4A_{n,m-1}+C_{n}^{m},
\end{equation}
we have a recursive way of computing the left hand side. This is enough to prove that

\begin{equation}
{\rm Tr}_{A^{m}}[\sigma_L(F^{g_{1}}_{d})(\xi)\sigma_L(F^{g_{2}}_{d})(\eta)]
=a_{n,m}\frac{\langle\xi, \eta\rangle_{g_{1}}\langle\xi, \eta\rangle_{g_{2}}}{|\xi|_{g_{1}}^2|\eta|_{g_{2}}^2}+b_{n,m},
\end{equation}
for some constants $a_{n,m}$ and $b_{n,m}$.

Now in the particular case in which $\xi=\eta=e^{j} $ is a member of an orthonormal  basis of $1-$ forms about $g_{2}$.
We compute $a_{n,m}$ and $b_{n,m}$ by (3.21) and taking special $\xi, \eta$. Then we have

\begin{equation}
\sigma_L(F^{g_{1}}_{d})(\xi)\sigma_L(F^{g_{2}}_{d})(\eta)
=\frac{\varepsilon(\xi)\iota^{g_{1}}(\xi)-\iota^{g_{1}}(\xi)\varepsilon(\xi)}{|\xi|_{g_{1}}^{2}}
\times[\varepsilon(\eta)\iota^{g_{2}}(\eta)-\iota^{g_{2}}(\eta)\varepsilon(\eta)].
\end{equation}

For a basic $m-$ form $\{e^{j_{1}}\wedge e^{j_{2}}\wedge \cdots \wedge e^{j_{m}}\}$ , one has

\begin{equation}
(\varepsilon_{m-1}(\xi)\iota^{g_{2}}_{m}(\xi)-\iota^{g_{2}}_{m+1}(\xi)\varepsilon_{m}(\xi))
(e^{j_{1}}\wedge e^{j_{2}}\wedge \cdots \wedge e^{j_{m}})=e^{j_{1}}\wedge e^{j_{2}}\wedge \cdots \wedge e^{j_{m}},
\end{equation}
if $ \xi=e^{j_{i}}$, for some $i$; and

\begin{equation}
(\varepsilon_{m-1}(\xi)\iota^{g_{2}}_{m}(\xi)-\iota^{g_{2}}_{m+1}(\xi)\varepsilon_{m}(\xi))
(e^{j_{1}}\wedge e^{j_{2}}\wedge \cdots \wedge e^{j_{m}})=-e^{j_{1}}\wedge e^{j_{2}}\wedge \cdots \wedge e^{j_{m}},
\end{equation}
if $ \xi\neq e^{j_{i}}$, for every $i$.

\begin{equation}
(\varepsilon_{m-1}(\xi)\iota^{g_{1}}_{m}(\xi)-\iota^{g_{1}}_{m+1}(\xi)\varepsilon_{m}(\xi))
(e^{j_{1}}\wedge e^{j_{2}}\wedge \cdots \wedge e^{j_{m}})
=|\xi|_{g_{1}}^{2}(e^{j_{1}}\wedge e^{j_{2}}\wedge \cdots \wedge e^{j_{m}}),
\end{equation}
 if $ \xi=e^{j_{i}}$, for some $i$; and
\begin{eqnarray}
&&(\varepsilon_{m-1}(\xi)\iota^{g_{1}}_{m}(\xi)-\iota^{g_{1}}_{m+1}(\xi)\varepsilon_{m}(\xi))
(e^{j_{1}}\wedge e^{j_{2}}\wedge \cdots \wedge e^{j_{m}}) \nonumber\\
&=&2\sum_{k=1}^{m} g_{1}(\xi,e^{j_{k}})(-1)^{k-1}(\xi\wedge e^{j_{1}}\wedge e^{j_{2}}\wedge \cdots \wedge \widehat e^{j_{k}}\cdots \wedge e^{j_{m}})
-|\xi|_{g_{1}}^{2}(e^{j_{1}}\wedge e^{j_{2}}\wedge \cdots \wedge e^{j_{m}}),
\end{eqnarray}
if $ \xi\neq e^{j_{i}}$, for every $i$.

For $\xi=\eta=e^{j_{i}}$,
\begin{equation}
\sigma_L(F^{g_{1}}_{d})(\xi)\sigma_L(F^{g_{2}}_{d})(\eta)(e^{j_{1}}\wedge e^{j_{2}}\wedge \cdots \wedge e^{j_{m}})
=e^{j_{1}}\wedge e^{j_{2}}\wedge \cdots \wedge e^{j_{m}}.
\end{equation}
 If both $\xi$, $\eta $ do not belong to $\{e^{j_{1}}\wedge e^{j_{2}}\wedge \cdots \wedge e^{j_{m}}\}$, then
\begin{eqnarray}
&&\sigma_L(F^{g_{1}}_{d})(\xi)\sigma_L(F^{g_{2}}_{d})(\eta)(e^{j_{1}}\wedge e^{j_{2}}\wedge \cdots \wedge e^{j_{m}}) \nonumber\\
&=&2|\xi|_{g_{1}}^{-2}\sum_{k=1}^{m} g_{1}(\xi,e^{j_{k}})(-1)^k(\xi\wedge e^{j_{1}}\wedge e^{j_{2}}\wedge \cdots \wedge \widehat e^{j_{k}}\cdots \wedge e^{j_{m}})
+e^{j_{1}}\wedge e^{j_{2}}\wedge \cdots \wedge e^{j_{m}},
\end{eqnarray}
since the first term of (3.28) does not use for computing the {\rm trace}. For $\xi=\eta=e^{j_{1}}$, we get
\begin{equation}
{\rm Tr}_{A^{m}}[\sigma_L(F^{g_{1}}_{d})(\xi)\sigma_L(F^{g_{2}}_{d})(\eta)]
=C_{n}^{m},
\end{equation}
therefore, $a_{n,m}+b_{n,m}=C_{n}^{m}$.

If both $\xi$ and $\eta $ are different members of an orthonomal basis of $g_{2}$, the term $g_{2}(\xi, \eta)=0$ and
the expression (3.10) reduce to $b_{n,m}$.

For $\xi,~\eta \in \{e^{j_{i}}, \cdots,~ e^{j_{m}}\}$, by (3.23) and (3.25), then
\begin{eqnarray}
&&\sigma_L(F^{g_{1}}_{d})(\xi)\sigma_L(F^{g_{2}}_{d})(\eta)(e^{j_{1}}\wedge e^{j_{2}}\wedge \cdots \wedge e^{j_{m}})\nonumber\\
&=&\frac{\varepsilon(\xi)\iota^{g_{1}}(\xi)-\iota^{g_{1}}(\xi)\varepsilon(\xi)}{|\xi|_{g_{1}}^{2}}
[\varepsilon(\xi)\iota^{g_{2}}(\xi)-\iota^{g_{2}}(\xi)\varepsilon(\xi)]e^{j_{1}}\wedge e^{j_{2}}\wedge \cdots \wedge e^{j_{m}}\nonumber\\
&=&\frac{\varepsilon(\xi)\iota^{g_{1}}(\xi)-\iota^{g_{1}}(\xi)\varepsilon(\xi)}{|\xi|_{g_{1}}^{2}}
e^{j_{1}}\wedge e^{j_{2}}\wedge \cdots \wedge e^{j_{m}}=e^{j_{1}}\wedge e^{j_{2}}\wedge \cdots \wedge e^{j_{m}};
\end{eqnarray}
and by (3.24) and (3.26), then
\begin{eqnarray}
&&\sigma_L(F^{g_{1}}_{d})(\xi)\sigma_L(F^{g_{2}}_{d})(\eta)(e^{j_{1}}\wedge e^{j_{2}}\wedge \cdots \wedge e^{j_{m}})
=\sigma_L(F^{g_{1}}_{d})(\xi)[-(e^{j_{1}}\wedge e^{j_{2}}\wedge \cdots \wedge e^{j_{m}})] \nonumber\\
&=&2|\xi|_{g_{1}}^{-2}\sum_{k=1}^{m} g_{1}(\xi,e_{j_{k}})(-1)^{k}(\xi\wedge e^{j_{1}}\wedge e^{j_{2}}\wedge
\cdots \wedge \widehat e^{j_{k}}\cdots \wedge e^{j_{m}})
+(e^{j_{1}}\wedge e^{j_{2}}\wedge \cdots \wedge e^{j_{m}}),
\end{eqnarray}
if both $\xi$, $\eta $ do not belong to $\{e^{j_{i}}, \cdots,~ e^{j_{m}}\}$; and
\begin{eqnarray}
\sigma_L(F^{g_{1}}_{d})(\xi)\sigma_L(F^{g_{2}}_{d})(\eta)(e^{j_{1}}\wedge e^{j_{2}}\wedge \cdots \wedge e^{j_{m}})
&=&\sigma_L(F^{g_{1}}_{d})(\xi)[-(e^{j_{1}}\wedge e^{j_{2}}\wedge \cdots \wedge e^{j_{m}})] \nonumber\\
&=&-e^{j_{1}}\wedge e^{j_{2}}\wedge \cdots \wedge e^{j_{m}},
\end{eqnarray}
if only $\xi$ belongs to $\{e^{j_{i}}, \cdots,~ e^{j_{m}}\}$ and  $\eta $ does not; and
\begin{eqnarray}
&&\sigma_L(F^{g_{1}}_{d})(\xi)\sigma_L(F^{g_{2}}_{d})(\eta)(e^{j_{1}}\wedge e^{j_{2}}\wedge \cdots \wedge e^{j_{m}})
=\sigma_L(F^{g_{1}}_{d})(\xi)(e^{j_{1}}\wedge e^{j_{2}}\wedge \cdots \wedge e^{j_{m}}) \nonumber\\
&=&2|\xi|_{g_{1}}^{-2}\sum_{k=1}^{m} g_{1}(\xi,e^{j_{k}})(-1)^{k-1}(\xi\wedge e^{j_{1}}\wedge e^{j_{2}}\wedge \cdots
\wedge \widehat e^{j_{k}}\cdots \wedge e^{j_{m}})
-(e^{j_{1}}\wedge e^{j_{2}}\wedge \cdots \wedge e^{j_{m}}),
\end{eqnarray}
if only $\eta $ belongs to $\{e^{j_{i}}, \cdots,~ e^{j_{m}}\}$ and $\xi$ does not, the first term of (3.31) and (3.33) 
does not use for computing the {\rm trace}.

 So the number of basic $m-$ forms  containing both $\xi$ and $\eta $  is $C_{n-2}^{m-2}$, and the number of
basic $m-$ forms containing neither $\xi$ nor $\eta $ is $C_{n-2}^{m}$, the number of basic $m-$ forms
containing exactly one of $\xi$ or $\eta $ is $2C_{n-2}^{m-1}$. So the value of $b_{n,m}$ is given by the
{\rm trace} of the above operator, which equals $b_{n,m}=C_{n-2}^{m-2}+C_{n-2}^{m}-2C_{n-2}^{m-1}$, hence
$a_{n,m}=C_n^m-C_{n-2}^{m-2}-C_{n-2}^{m}+2C_{n-2}^{m-1}$, and the proof is complete.
\end{proof}
\section{$\Omega^{g_{1},g_{2}}_{d}(f_1,f_2)$ for 2-dimensional Compact Real Manifolds }
\label{4}
In this section, we compute this conformal invariant for 2-dimensional compact manifolds. Let
$g^{1}$ and $g^{2}$  are  Riemannian metrics on the real tangent bundle $TM$ of $M$.
For $n=2$, let $M=S^{1}\times S^{1}$ and $g^{1}=g^{S^{1}}\oplus g^{S^{1}} $
be the orthonormal metric and $g^{2}$ be a different metric defined by
\begin{equation}
 g^{2}(e^{1},e^{1})=f;~g^{2}(e^{1},e^{2})=0; ~g^{2}(e^{2},e^{2})=h;~~(f>h>0)
 \end{equation}
where $e^{1},e^{2}$ are global othonormal basis of $g^{1}$.

By lemma 3.2 (for the definition of $\Omega^{g_{1},g_{2}}_{d}(f_1,f_2)$), since the sum is taken
$|\alpha|+|\beta|+|\delta|=2; ~1\leq |\beta|; ~1\leq |\delta|$, we get $|\alpha|=0,~|\beta|=|\delta|=1$, then we have the four cases:
$\{\beta =(1,0),~ \delta=(1,0)\};~\{\beta =(1,0),~\delta =(0,1)\};~\{\beta =(0,1),~\delta =(1,0)\};~\{\beta =(0,1),~\delta =(0,1)\}.$

Then we have
\begin{eqnarray}
\Omega^{g_{1},g_{2}}_{d}(f_1,f_2)
&=&\int_{|\xi|=1}{\rm tr}_{A^{1}}
[\sum\frac{1}{\alpha!\beta!\delta!}D^{\beta}_x({f_1})D^{\alpha+\delta}_x({f_2})\partial^{\alpha+\beta}_{\xi}(\sigma_L(F^{g_{2}}_{\overline{\partial}}))
\partial^{\delta}_{\xi}(\sigma_L(F^{g_{2}}_{\overline{\partial}}))]d\xi dx \nonumber\\
&=&\int_{|\xi|=1}\frac{1}{1!0!1!0!} D_{x}^{(1,0)}(f_{1}) D_{x}^{(1,0)}(f_{2})\partial_{(\xi_{1},~\xi_{2})}^{(1,0)}\partial_{(\eta_{1},~
\eta_{2})}^{(1,0)} {\rm trace}(\sigma_{L}^{F_{g_{1}}}(\xi) \sigma_{L}^{F_{g_{2}}}(\eta)) d\xi dx \nonumber\\
&&+\int_{|\xi|=1}\frac{1}{1!0!0!1!} D_{x}^{(1,0)}(f_{1}) D_{x}^{(0,1)}(f_{2})\partial_{(\xi_{1},~\xi_{2})}^{(1,0)}\partial_{(\eta_{1},~
\eta_{2})}^{(0,1)} {\rm trace}(\sigma_{L}^{F_{g_{1}}}(\xi) \sigma_{L}^{F_{g_{2}}}(\eta)) d\xi dx \nonumber\\
&&+\int_{|\xi|=1}\frac{1}{0!1!1!0!} D_{x}^{(0,1)}(f_{1}) D_{x}^{(1,0)}(f_{2})\partial_{(\xi_{1},~\xi_{2})}^{(0,1)}\partial_{(\eta_{1},~
\eta_{2})}^{(1,0)} {\rm trace}(\sigma_{L}^{F_{g_{1}}}(\xi) \sigma_{L}^{F_{g_{2}}}(\eta)) d\xi dx \nonumber\\
&&+\int_{|\xi|=1}\frac{1}{0!1!0!1!} D_{x}^{(0,1)}(f_{1}) D_{x}^{(0,1)}(f_{2})\partial_{(\xi_{1},~\xi_{2})}^{(0,1)}\partial_{(\eta_{1},~
\eta_{2})}^{(0,1)} {\rm trace}(\sigma_{L}^{F_{g_{1}}}(\xi) \sigma_{L}^{F_{g_{2}}}(\eta)) d\xi dx.
\end{eqnarray}

Since $n=2$, we get
\begin{eqnarray}
\Omega^{g_{1},g_{2}}_{d}(f_1,f_2)
&=&D_{x}^{(1,0)}(f_{1}) D_{x}^{(1,0)}(f_{2})\int_{|\xi|=1}\partial_{\xi_{1}}\partial_{\eta_{1}}
 {\rm trace}(\sigma_{L}^{F_{g_{1}}}(\xi) \sigma_{L}^{F_{g_{2}}}(\eta)) d\xi dx \nonumber\\
&&+D_{x}^{(1,0)}(f_{1}) D_{x}^{(0,1)}(f_{2})\int_{|\xi|=1}\partial_{\xi_{1}}\partial_{
\eta_{2}} {\rm trace}(\sigma_{L}^{F_{g_{1}}}(\xi) \sigma_{L}^{F_{g_{2}}}(\eta)) d\xi dx \nonumber\\
&&+D_{x}^{(0,1)}(f_{1}) D_{x}^{(1,0)}(f_{2})\int_{|\xi|=1}\partial_{\xi_{2}}\partial_{\eta_{1}
} {\rm trace}(\sigma_{L}^{F_{g_{1}}}(\xi) \sigma_{L}^{F_{g_{2}}}(\eta)) d\xi dx \nonumber\\
&&+D_{x}^{(0,1)}(f_{1}) D_{x}^{(0,1)}(f_{2})\int_{|\xi|=1}\partial_{\xi_{2}}\partial_{
\eta_{2}}{\rm trace}(\sigma_{L}^{F_{g_{1}}}(\xi) \sigma_{L}^{F_{g_{2}}}(\eta)) d\xi dx\nonumber\\
&=&D_{x}^{(1,0)}(f_{1}) D_{x}^{(1,0)}(f_{2})\int_{|\xi|=1}A_{1}d\xi dx+D_{x}^{(1,0)}(f_{1}) D_{x}^{(0,1)}(f_{2})\int_{|\xi|=1}A_{2}d\xi dx\nonumber\\
&&+D_{x}^{(0,1)}(f_{1}) D_{x}^{(1,0)}(f_{2})\int_{|\xi|=1}A_{3}d\xi dx+D_{x}^{(0,1)}(f_{1}) D_{x}^{(0,1)}(f_{2})\int_{|\xi|=1}A_{4}d\xi dx.
\end{eqnarray}

In this subsection we denote $\mid_{|\xi|=1}^{\xi=\eta}$ by $ |_{*}$. For $\xi=\sum_{1\leq i \leq n}\xi_ie^i$,
using the Theorem 3.4 and $n=2$, we have

\begin{equation}
 {\rm trace}(\sigma_{L}^{F_{g_{1}}}(\xi) \sigma_{L}^{F_{g_{2}}}(\eta))
 =4\frac{(\xi_{1}\eta_{1}+\xi_{2}\eta_{2})(f\xi_{1}\eta_{1}+h\xi_{2}\eta_{2})}{|\xi|_{g_{1}}^2|\eta|_{g_{2}}^2}-3.
\end{equation}

Through the computation
\begin{equation}
A_{1}=\partial_{\xi_{1}}\partial_{\eta_{1}} {\rm trace}(\sigma_{L}^{F_{g_{1}}}(\xi) \sigma_{L}^{F_{g_{2}}}(\eta))|_{*}
=1-2\xi_{1}^{2}+\frac{f(1+2\xi_{1}^{2})}{f\xi_{1}^{2}+h\xi_{2}^{2}}-\frac{2f^{2}\xi_{1}^{2}}{(f\xi_{1}^{2}+h\xi_{2}^{2})^{2}};
\end{equation}
\begin{equation}
A_{2}=\partial_{\xi_{1}}\partial_{\eta_{2}} {\rm trace}(\sigma_{L}^{F_{g_{1}}}(\xi) \sigma_{L}^{F_{g_{2}}}(\eta))|_{*}
=-2\xi_{1}\xi_{2}+\frac{(f+h)\xi_{1}\xi_{2}}{f\xi_{1}^{2}+h\xi_{2}^{2}}-\frac{2fh\xi_{1}\xi_{2}}{(f\xi_{1}^{2}+h\xi_{2}^{2})^{2}};
\end{equation}
\begin{equation}
A_{3}=\partial_{\xi_{2}}\partial_{\eta_{1}} {\rm trace}(\sigma_{L}^{F_{g_{1}}}(\xi) \sigma_{L}^{F_{g_{2}}}(\eta))|_{*}
=-2\xi_{1}\xi_{2}+\frac{(f+h)\xi_{1}\xi_{2}}{f\xi_{1}^{2}+h\xi_{2}^{2}}-\frac{2fh\xi_{1}\xi_{2}}{(f\xi_{1}^{2}+h\xi_{2}^{2})^{2}};
\end{equation}
\begin{equation}
A_{4}=\partial_{\xi_{2}}\partial_{\eta_{2}}{\rm trace}(\sigma_{L}^{F_{g_{1}}}(\xi) \sigma_{L}^{F_{g_{2}}}(\eta))|_{*}
=1-2\xi_{2}^{2}+\frac{h(1+2\xi_{2}^{2})}{f\xi_{1}^{2}+h\xi_{2}^{2}}-\frac{2h^{2}\xi_{2}^{2}}{(f\xi_{1}^{2}+h\xi_{2}^{2})^{2}}.
\end{equation}

Since $|\xi|=1$ is the unit circle, let $\xi_{1}=cos\theta, ~ \xi_{2}=sin\theta $. By (4.3)and(4.5), we have

\begin{equation}
\int_{|\xi|=1}A_{1}d\xi=\int_{0}^{2\pi}(1-2cos^{2}\theta)d\theta
+f\int_{0}^{2\pi}\frac{1+2cos^{2}\theta}{f cos^{2}\theta+h sin^{2}\theta} d\theta
-2f^{2}\int_{0}^{2\pi} \frac{cos^{2}\theta}{(fcos^{2}\theta+h sin^{2}\theta)^{2}} d\theta.
\end{equation}

By the integral formula and $f>h>0$, we have

\begin{equation}
\int_{0}^{2\pi}(1-2cos^{2}\theta)d\theta=0;
~\int_{0}^{2\pi}\frac{cos^{2}\theta}{f cos^{2}\theta+h sin^{2}\theta} d\theta=\frac{2\pi}{f-h}-\frac{2\pi h}{\sqrt{fh}(f-h)};
\end{equation}
\begin{equation}
\int_{0}^{2\pi}\frac{1}{f cos^{2}\theta+h sin^{2}\theta} d\theta=\frac{2\pi}{\sqrt{fh}},
~\int_{0}^{2\pi} \frac{cos^{2}\theta}{(fcos^{2}\theta+h sin^{2}\theta)^{2}} d\theta=\frac{2\pi}{f\sqrt{fh}}.
\end{equation}

By (4.9), (4.10) and (4.11), we get $\int_{|\xi|=1}A_{1}d\xi=\frac{4\pi\sqrt{f}}{\sqrt{f}+\sqrt{h}} $.

Similar to the computation of the first case, by integral formula, then
$\int_{|\xi|=1}A_{2}d\xi=\int_{|\xi|=1}A_{3}d\xi=0$, $\int_{|\xi|=1}A_{4}d\xi=\frac{4\pi\sqrt{h}}{\sqrt{f}+\sqrt{h}}$.

Therefore, for $n=2$, we compute the double conformally invariant

\begin{equation}
\Omega^{g_{1},g_{2}}_{d}(f_1,f_2)
=\frac{4\pi}{\sqrt{f}+\sqrt{h}}(D_{x}^{(1,0)}(f_{1}) D_{x}^{(1,0)}(f_{2})\sqrt{f}dvol_{g^{S^{1}}\oplus g^{S^{1}}}
+D_{x}^{(0,1)}(f_{1}) D_{x}^{(0,1)}(f_{2})\sqrt{h}dvol_{g^{S^{1}}\oplus g^{S^{1}}}).
\end{equation}

Next, for $f>h>0$, we let
\begin{equation}
  g^{2}(e^{1},e^{1})=f; ~g^{2}(e^{1},e^{2})=h; ~g^{2}(e^{2},e^{2})=f.
\end{equation}

Similar to the computation of 4.2, for $n=2$, we have
\begin{eqnarray}
\Omega^{g_{1},g_{2}}_{d}(f_1,f_2)
&=&D_{x}^{(1,0)}(f_{1}) D_{x}^{(1,0)}(f_{2})\int_{|\xi|=1}B_{1}d\xi dx+D_{x}^{(1,0)}(f_{1}) D_{x}^{(0,1)}(f_{2})\int_{|\xi|=1}B_{2}d\xi dx\nonumber\\
&&+D_{x}^{(0,1)}(f_{1}) D_{x}^{(1,0)}(f_{2})\int_{|\xi|=1}B_{3}d\xi dx+D_{x}^{(0,1)}(f_{1}) D_{x}^{(0,1)}(f_{2})\int_{|\xi|=1}B_{4}d\xi dx.
\end{eqnarray}

For $\xi=\sum_{1\leq i \leq n}\xi_ie^i$, using the Theorem 3.4, we have
\begin{equation}
 {\rm trace}(\sigma_{L}^{F_{g_{1}}}(\xi) \sigma_{L}^{F_{g_{2}}}(\eta))
 =4\frac{(\xi_{1}\eta_{1}+\xi_{2}\eta_{2})(f\xi_{1}\eta_{1}+f\xi_{2}\eta_{2}+h\xi_{1}\eta_{2}+h\xi_{2}\eta_{1})}{|\xi|_{g_{1}}^2|\eta|_{g_{2}}^2}-3.
\end{equation}

Through the computation
\begin{equation}
B_{1}=\partial_{\xi_{1}}\partial_{\eta_{1}} {\rm trace}(\sigma_{L}^{F_{g_{1}}}(\xi) \sigma_{L}^{F_{g_{2}}}(\eta))|_{*}
=1-2\xi_{1}^{2}+\frac{f(1+2\xi_{1}^{2})+2h\xi_{1}\xi_{2}}{f+2h\xi_{1}\xi_{2}}
-\frac{2(f\xi_{1}+h\xi_{2})^{2}}{(f+2h\xi_{1}\xi_{2})^{2}};
\end{equation}
\begin{equation}
B_{2}=\partial_{\xi_{1}}\partial_{\eta_{2}} {\rm trace}(\sigma_{L}^{F_{g_{1}}}(\xi) \sigma_{L}^{F_{g_{2}}}(\eta))|_{*}
=-2\xi_{1}\xi_{2}+\frac{2f\xi_{1}\xi_{2}+2h}{f+2h\xi_{1}\xi_{2}}
-\frac{2[fh+(f^{2}+h^{2})\xi_{1}\xi_{2}]^{2}}{(f+2h\xi_{1}\xi_{2})^{2}};
\end{equation}
\begin{equation}
B_{3}=\partial_{\xi_{2}}\partial_{\eta_{1}} {\rm trace}(\sigma_{L}^{F_{g_{1}}}(\xi) \sigma_{L}^{F_{g_{2}}}(\eta))|_{*}
=-2\xi_{1}\xi_{2}+\frac{2f\xi_{1}\xi_{2}+2h}{f+2h\xi_{1}\xi_{2}}
-\frac{2[fh+(f^{2}+h^{2})\xi_{1}\xi_{2}]^{2}}{(f+2h\xi_{1}\xi_{2})^{2}};
\end{equation}
\begin{equation}
B_{4}=\partial_{\xi_{2}}\partial_{\eta_{2}}{\rm trace}(\sigma_{L}^{F_{g_{1}}}(\xi) \sigma_{L}^{F_{g_{2}}}(\eta))|_{*}
=1-2\xi_{2}^{2}+\frac{f(1+2\xi_{2}^{2})+2h\xi_{1}\xi_{2}}{f+2h\xi_{1}\xi_{2}}
-\frac{2(f\xi_{2}+h\xi_{1})^{2}}{(f+2h\xi_{1}\xi_{2})^{2}}.
\end{equation}

Since $|\xi|=1$ is the unit circle, let $\xi_{1}=cos\theta, \xi_{2}=sin\theta $. By (4.14) and (4.16), we have

\begin{equation}
\int_{|\xi|=1}B_{1}d\xi
=\int_{0}^{2\pi}(1-2cos^{2}\theta)d\theta
+\int_{0}^{2\pi}\frac{f(1+2cos^{2}\theta)+2h cos\theta sin\theta}{f+2h cos\theta sin\theta}d\theta
-\int_{0}^{2\pi}\frac{2(fcos\theta+h sin\theta)^{2}}{(f+2h cos\theta sin\theta)^{2}}d\theta.
\end{equation}

By the integral formula and $f>h>0$, we get

\begin{equation}
\int_{0}^{2\pi}(1-2cos^{2}\theta)d\theta=0;
~\int_{0}^{2\pi}\frac{1}{sin 2 \theta+\frac{f}{h}} d\theta=\frac{8h}{\sqrt{f^{2}-h^{2}}} arctan\sqrt{\frac{f-h}{f+h}};
\end{equation}
\begin{equation}
\int_{0}^{\frac{\pi}{2}}\frac{1}{(sin\theta+\frac{f}{h})^{2}} d\theta
=\frac{h^{3}}{f(f^{2}-h^{2})}+\frac{2h^{3}}{(f^{2}-h^{2})\sqrt{f^{2}-h^{2}}} arctan\sqrt{\frac{f-h}{f+h}}.
\end{equation}

By (4.20), (4.21) and (4.22), then
\begin{equation}
\int_{|\xi|=1}B_{1}d\xi=2\pi+\frac{4h}{f}-8\sqrt{\frac{f-h}{f+h}} arctan\sqrt{\frac{f-h}{f+h}} .
\end{equation}

Similar to the computation of the first case, by integral formula, then
\begin{eqnarray}
\int_{|\xi|=1}B_{2}d\xi&=&\int_{|\xi|=1}B_{3}d\xi
=\frac{2f\pi}{h}-\frac{28h^{3}+8f^{3}+24h^{2}f+6hf^{2}}{(f+h)(f^{2}-h^{2})}\nonumber\\
&&+\frac{24h^{2}f-8f^{3}-8fh^{2}-8h^{3}}{h(f+h)\sqrt{f^{2}-h^{2}}}arctan\sqrt{\frac{f-h}{f+h}};
\end{eqnarray}

\begin{equation}
\int_{|\xi|=1}B_{4}d\xi=2\pi+\frac{4h}{f}-8\sqrt{\frac{f-h}{f+h}} arctan\sqrt{\frac{f-h}{f+h}}.
\end{equation}

Therefore, for $n=2$, we get the double conformally invariant
\begin{eqnarray}
&&\Omega^{g_{1},g_{2}}_{d}(f_1,f_2)\nonumber\\
&=&D_{x}^{(1,0)}(f_{1}) D_{x}^{(1,0)}(f_{2}) (2\pi+\frac{4h}{f}-8\sqrt{\frac{f-h}{f+h}} arctan\sqrt{\frac{f-h}{f+h}})dvol_{g^{S^{1}}\oplus g^{S^{1}}}\nonumber\\
&&+D_{x}^{(1,0)}(f_{1}) D_{x}^{(0,1)}(f_{2})(\frac{2f\pi}{h}-\frac{28h^{3}+8f^{3}+24h^{2}f+6hf^{2}}{(f+h)(f^{2}-h^{2})}\nonumber\\
&&+\frac{24h^{2}f-8f^{3}-8fh^{2}-8h^{3}}{h(f+h)\sqrt{f^{2}-h^{2}}}arctan\sqrt{\frac{f-h}{f+h}})dvol_{g^{S^{1}}\oplus g^{S^{1}}}\nonumber\\
&&+D_{x}^{(1,0)}(f_{1}) D_{x}^{(0,1)}(f_{2})(\frac{2f\pi}{h}-\frac{28h^{3}+8f^{3}+24h^{2}f+6hf^{2}}{(f+h)(f^{2}-h^{2})}\nonumber\\
&&+\frac{24h^{2}f-8f^{3}-8fh^{2}-8h^{3}}{h(f+h)\sqrt{f^{2}-h^{2}}}arctan\sqrt{\frac{f-h}{f+h}})dvol_{g^{S^{1}}\oplus g^{S^{1}}} \nonumber\\
&&+D_{x}^{(0,1)}(f_{1}) D_{x}^{(0,1)}(f_{2})(2\pi+\frac{4h}{f}-8\sqrt{\frac{f-h}{f+h}} arctan\sqrt{\frac{f-h}{f+h}})dvol_{g^{S^{1}}\oplus g^{S^{1}}}.
\end{eqnarray}

\section{A Double Conformally Invariant Differential Form $\Omega^{g_{1},g_{2}}_{\overline{\partial}}(f_1,f_2)$ for Complex Manifolds}
 \label{5}
 First we recall some basic facts on complex geometry (for details, see  \cite{CCL} or \cite{GH}).
Let $M$ be a compact connect complex manifold with complex dimension $n$. Let $g^{\bf R}$ be
a Riemannian metric on the real tangent bundle $T^{\bf R}M$ of $M$. We canonically extend $g^{{\bf R}}$ to a
Hermitian metric $g^{{\bf C}}$ on the complexified bundle $T^{\bf R}M\otimes {\bf C}$. Denote
by $g^{{1,0}}$ (res. $g^{{0,1}}$ ) the restriction on the holomorphic bundle $T^{1,0}M$ (res. $T^{0,1}M$ )
of  $g^{{\bf C}}$. Let $g^{{\bf R},*}$ (res. $g^{1,0,*}$, $g^{0,1,*}$) be the dual metric on the
dual bundle $T^{\bf R,*}M$ (rep. $T^{1,0,*}M$,$T^{0,1,*}M$). let $A^{p,q}$ denote the smooth
$(p,q)$-forms space on $M$.  Let $J$ be the canonical almost complex structure on $T^{\bf R}M$
and $\{ e_1,\cdots, e_n,e_{n+1},\cdots,e_{2n}\}$ be the local orthonormal basis which satisfies
$Je_i=e_{n+i}$. Thus $g^{\bf R}$ is $J$-invariant metric and

\begin{equation}
\phi_j=\frac{1}{\sqrt{2}}(e_j-\sqrt{-1}e_{n+j});~\overline{\phi_j}=\frac{1}{\sqrt{2}}(e_j+\sqrt{-1}e_{n+j}),
~1\leq j\leq n
\end{equation}
are respectively the orthonormal basis of $T^{1,0}M$ and $T^{0,1}M$. Let $\{ e^1,\cdots,e^n,e^{n+1},\cdots,e^{2n}\}$
be the dual basis on $T^{{\bf R},*}M$ which satisfies $Je^{j}=-e^{n+j}$ where $J$ is the induced almost
complex structure on $T^{{\bf R},*}M$. Then
\begin{equation}
\lambda_j=\frac{1}{\sqrt{2}}(e^j+\sqrt{-1}e^{n+j});~\overline{\lambda_j}=\frac{1}{\sqrt{2}}(e^j-\sqrt{-1}
e^{n+j}),~1\leq j\leq n
\end{equation}
are respectively the orthonormal basis of $T^{1,0,*}M$ and $T^{0,1,*}M$. $\{\phi_j,\overline{\phi_j}\}$ and
$\{\lambda_j,\overline{\lambda_j}\}$ are dual basis. Define the inner product on the smooth $(p,q)$-forms
space $A^{p,q}$ such that the basis $\{\lambda_{i_1}\wedge\cdots,\lambda_{i_p}\wedge
\overline{\lambda_{j_1}}\wedge\cdots\wedge\overline{\lambda_{j_q}}\}$ is the orthonormal basis.
$\Phi=\lambda_1\wedge\cdots\wedge\lambda_n\wedge\overline{\lambda_1}\wedge\cdots\wedge\overline{\lambda_n}$
be the canonical volume form on $M$ up to some constant. Define an inner product for $\psi, \eta\in A^{p,q}$

\begin{equation}
\langle\langle\psi,\eta\rangle\rangle=\int_M\langle\psi,\eta\rangle\Phi,
\end{equation}
which makes $A_{p,q}$ be a preHilbert space and let $L^2(A^{p,q})$ be $L^2$-completion. Recall the
Hodge $\star$ operator $\star:A^{p,q}\rightarrow A^{n-p,n-q}$ defined as follows:

\begin{equation}
\langle\psi,\eta\rangle\Phi=\psi\wedge\star\eta,~ {\rm for}~
\psi,\eta\in A_{p,q}.
\end{equation}
Let $\Delta$ be the
$\overline{\partial}$-Laplacian and $\overline{\partial}^*=\widehat{c}\star\partial\star$ be the adjoint
operator of $\partial$ where $\widehat{c}$ is a constant. Let $H^{p,q}$ be the harmonic $(p,q)$-forms space.
Recall the Hodge Theorem says that (see \cite{GH})

\begin{equation}
A_{p,q}=H^{p,q}\oplus\overline{\partial}A^{p,q-1}\oplus\overline{\partial}^*A^{p,q+1}.
\end{equation}
Now we take the conformal rescaling metric $\widetilde{g}=e^{2f}g^{{\bf R}}$ for some smooth function $f$ on
$M$, then the associated dual metric is $e^{-2f}g^{{\bf R},\star}$ and the associated orthonormal basis are
$e^{-f}\phi_i, e^{-f}\overline{\phi_i}$ and  $e^{f}\lambda_i, e^{f}\overline{\lambda_i}$. Let $\widehat{\star}$ be
the Hodge star operator associated to the rescaling metric.

\begin{lem}\label{le:51}
 $ \widehat{\star}=e^{2(n-p-q)f}\star.$
In particular, when $p+q=n$, the $\star$ operator is conformal invariant, that is $\widehat{\star}=\star.$
\begin{proof}
By equalities $\widehat{\Phi}=e^{2nf}\Phi$ and
$\widetilde{g}(\psi,\eta)=e^{-2(p+q)f}g(\psi,\eta)$ for
$\psi,~\eta\in A^{p,q},$ then we have
\begin{equation}
\psi\wedge\widehat{\star}\eta=\widetilde{g}(\psi,\eta)\widetilde{\Phi}=e^{2(n-p-q)f}g(\psi,\eta){\Phi}
=e^{2(n-p-q)f}\psi\wedge{\star}\eta.
\end{equation}
by (5.6) and the Poincar$\acute{e}$ duality, we get $\widehat{\star}=e^{2(n-p-q)f}\star$.
\end{proof}
\end{lem}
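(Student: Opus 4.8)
The plan is to deduce the lemma directly from the relation that characterizes the Hodge star, namely $\psi\wedge\widehat{\star}\eta = \widetilde{g}(\psi,\eta)\,\widehat{\Phi}$ for $\psi,\eta\in A^{p,q}$, once we understand how its two ingredients --- the Riemannian volume form and the pointwise inner product on $(p,q)$-forms --- transform under $\widetilde{g}=e^{2f}g^{{\bf R}}$. So the first step is to record these two scalings. Because $M$ has real dimension $2n$, rescaling the metric by $e^{2f}$ rescales the volume by $e^{2nf}$, i.e.\ $\widehat{\Phi}=e^{2nf}\Phi$; this is also visible from the fact that the rescaled orthonormal coframe is $\{e^{f}\lambda_i,\,e^{f}\overline{\lambda_i}\}$, whose top exterior product is $e^{2nf}\Phi$. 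The same rescaled coframe shows that a basic $(p,q)$-form, being a wedge of $p+q$ covectors each of rescaled length $e^{-f}$, has rescaled square norm equal to $e^{-2(p+q)f}$ times the original one; hence $\widetilde{g}(\psi,\eta)=e^{-2(p+q)f}g(\psi,\eta)$ for all $\psi,\eta\in A^{p,q}$.

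Substituting these into the characterizing relation gives, for $\psi,\eta\in A^{p,q}$, the chain $\psi\wedge\widehat{\star}\eta = e^{-2(p+q)f}g(\psi,\eta)\cdot e^{2nf}\Phi = e^{2(n-p-q)f}\,g(\psi,\eta)\,\Phi = e^{2(n-p-q)f}\,\psi\wedge\star\eta = \psi\wedge\bigl(e^{2(n-p-q)f}\star\eta\bigr)$, where the original identity $g(\psi,\eta)\Phi=\psi\wedge\star\eta$ is used in the middle step. Both $\widehat{\star}\eta$ and $e^{2(n-p-q)f}\star\eta$ lie in $A^{n-p,n-q}$, so at each point $x$ the two agree after wedging with every $\psi\in A^{p,q}$; since the wedge pairing $A^{p,q}\times A^{n-p,n-q}\to A^{n,n}$ is nondegenerate at each point (the pointwise Poincar\'e duality --- one sees it because $\star$ is already a fiberwise isomorphism and $\psi\wedge\star\psi=|\psi|^{2}\Phi$), we may cancel $\psi$ and obtain $\widehat{\star}\eta = e^{2(n-p-q)f}\star\eta$, that is $\widehat{\star}=e^{2(n-p-q)f}\star$. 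Setting $p+q=n$ makes the exponent vanish, so $\widehat{\star}=\star$ in that case.

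The computation is short, and I expect no genuine obstacle; the only places to be careful are the bookkeeping of the exponent --- the volume form contributes $e^{+2nf}$ because the real dimension is $2n$, while a $(p,q)$-form contributes $e^{-2(p+q)f}$, one factor $e^{-2f}$ per covector and $p+q$ covectors --- and the appeal to fiberwise nondegeneracy of the wedge product to pass from ``equal after pairing with every $\psi$'' to ``equal''. This is simply the complex-dimensional counterpart of Ugalde's real-case identity $\widehat{*}=e^{(2p-n)f}*$ quoted in Section~\ref{2}.
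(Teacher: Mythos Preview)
Your proof is correct and follows essentially the same route as the paper: both arguments use the defining relation $\psi\wedge\widehat{\star}\eta=\widetilde{g}(\psi,\eta)\widehat{\Phi}$, insert the scalings $\widehat{\Phi}=e^{2nf}\Phi$ and $\widetilde{g}(\psi,\eta)=e^{-2(p+q)f}g(\psi,\eta)$, and then invoke nondegeneracy of the wedge pairing (Poincar\'e duality) to cancel $\psi$. Your version simply supplies more justification for the two scaling identities and for the final cancellation step.
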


  By the definition of the inner product, when $n=p+q$, $L^2(A^{p,q})$ is conformally invariant. In the following, we
 always assume $n=p+q$. By the Hodge decomposition theorem (also see \cite{Ug2}), we have
 $H^{p,q}={\rm ker}(\overline{\partial})\cap {\rm ker}(\overline{\partial}\star).$ So by Lemma 5.1 and
 $\overline{\partial}$ is independent of metric, we get that $H^{p,q}$ is conformally invariant.
 Let $F_{\overline{\partial}}=2P_{\rm Im\overline{\partial}}-{\rm Id}$ where $P_{\rm Im\overline{\partial}}$
 is the projection from $L^2(A^{p,q})$ to $ \rm Im\overline{\partial}$. As in [Co], we have on $A^{p,q}\ominus H^{p,q}$

\begin{equation}
F_{\overline{\partial}}=\frac{{\bar{\partial}} \bar{\partial}^*-\overline{\partial}^*\overline{\partial}}
{\bar{\partial}\bar{\partial}^*+\overline{\partial}^*\overline{\partial}}.
\end{equation}

Thus $F_{\overline{\partial}}$ is a conformally invariant $0$-order pseudodifferential
bounded operator satisfying $F_{\overline{\partial}}=F_{\overline{\partial}}^*$ and
$F_{\overline{\partial}}^2=1$. Let $C^{\infty}(M)$ be the set of smooth complex-valued functions on $M$.
Then we have

\begin{lem}\label{le:52}
If $M$ is a compact conformal complex manifold without boundary, then $(C^{\infty}( M), L^2(A^{p,q}),
F_{\overline{\partial}})$ is a conformally invariant Fredholm module (for definition, see \cite{GVF}) .
\end{lem}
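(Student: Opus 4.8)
The plan is to check the defining properties of a Fredholm module over the involutive algebra $C^{\infty}(M)$ in the sense of \cite{GVF}, and then to observe that the resulting data depends only on the conformal class of $g^{\bf R}$. We take the Hilbert space to be $L^{2}(A^{p,q})$ with $p+q=n$, with $C^{\infty}(M)$ acting by pointwise multiplication on $(p,q)$-forms, and the operator to be $F=F_{\overline{\partial}}$. The relations $F_{\overline{\partial}}=F_{\overline{\partial}}^{*}$ and $F_{\overline{\partial}}^{2}=1$ have already been recorded above: they follow from $F_{\overline{\partial}}=2P_{\rm Im\overline{\partial}}-{\rm Id}$ once one notes that $F_{\overline{\partial}}$ preserves the finite-dimensional space $H^{p,q}$, acting there as $-{\rm Id}$, and on $A^{p,q}\ominus H^{p,q}$ is given by (5.8). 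So the two points that remain are the compactness of $[F_{\overline{\partial}},f]$ for every $f\in C^{\infty}(M)$ and the conformal invariance of the triple.

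For the compactness of the commutators, which is the technical heart of the statement, I would argue via the symbol calculus. The $\overline{\partial}$-Laplacian $\Delta_{\overline{\partial}}=\overline{\partial}\,\overline{\partial}^{*}+\overline{\partial}^{*}\overline{\partial}$ is an elliptic nonnegative second-order operator on $A^{p,q}$, so by Seeley's functional calculus its Green operator (the inverse on the orthogonal complement of $H^{p,q}$) is a classical pseudodifferential operator of order $-2$; hence, by formula (5.8) together with the finite-rank correction coming from the harmonic forms, $F_{\overline{\partial}}$ is a classical pseudodifferential operator of order $0$. Multiplication by $f\in C^{\infty}(M)$ is a pseudodifferential operator of order $0$ whose principal symbol is the scalar $f(x)\,{\rm Id}$, so the principal symbols of $F_{\overline{\partial}}$ and of $f$ commute; therefore $[F_{\overline{\partial}},f]$ is a pseudodifferential operator of order at most $-1$, and every such operator on the closed manifold $M$ is compact on $L^{2}$. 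I expect the one step needing care to be the verification that the functional-calculus expression (5.8) does define a classical pseudodifferential operator of order $0$; the rest is routine.

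For conformal invariance I would assemble facts already at hand. By Lemma \ref{le:51}, in the range $p+q=n$ the Hodge operator $\star$, and with it the inner product on $A^{p,q}$ and the Hilbert space $L^{2}(A^{p,q})$, are unchanged under $g^{\bf R}\mapsto e^{2f}g^{\bf R}$; the operator $\overline{\partial}$ and the subspace ${\rm Im}\,\overline{\partial}=\overline{\partial}(A^{p,q-1})$ do not depend on the metric at all; and $H^{p,q}={\rm ker}(\overline{\partial})\cap{\rm ker}(\overline{\partial}\star)$ is therefore conformally invariant, as noted above. Hence the orthogonal projection $P_{\rm Im\overline{\partial}}$ onto the $L^{2}$-closure of ${\rm Im}\,\overline{\partial}$, and with it $F_{\overline{\partial}}=2P_{\rm Im\overline{\partial}}-{\rm Id}$, depend only on the conformal class of $g^{\bf R}$, while the multiplication representation of $C^{\infty}(M)$ is manifestly metric-independent. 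Combining this with the previous paragraphs shows that $(C^{\infty}(M),L^{2}(A^{p,q}),F_{\overline{\partial}})$ is a conformally invariant Fredholm module.
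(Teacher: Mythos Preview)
Your argument is correct and matches the paper's approach: the paper does not give a separate proof of this lemma but simply records, in the lines preceding it, that $F_{\overline{\partial}}$ is a conformally invariant $0$-order pseudodifferential operator with $F_{\overline{\partial}}=F_{\overline{\partial}}^{*}$ and $F_{\overline{\partial}}^{2}=1$, treating the lemma as an immediate consequence. Your proof spells out exactly these points, and in particular supplies the one step the paper leaves entirely implicit, namely that $[F_{\overline{\partial}},f]$ is compact because it is a pseudodifferential operator of order at most $-1$; so your write-up is a faithful and more detailed version of the paper's reasoning.
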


Let $g_{1}^{{\bf R},TM},~ g_{2}^{{\bf R},TM}$ be the two different metrics on $T^{{\bf R},*}M$.
Let $F^{g_{i}}_{\overline{\partial}}=2P_{\rm Im\overline{\partial}}-{\rm Id}~(i=1, 2)$ where $P_{\rm Im\overline{\partial}}$
 is the projection from $L^2(A^{p,q})$ to $ \rm Im\overline{\partial}$. As in \cite{Co}, we have on $A^{p,q}\ominus H^{p,q}$,
  for $i=1,~2$

\begin{equation}
F^{g_{i}}_{\overline{\partial}}=\frac{{\bar{\partial}} \bar{\partial}_{g_{i}}^*-\overline{\partial}_{g_{i}}^*\overline{\partial}}
{\bar{\partial}\bar{\partial}_{g_{i}}^*+\overline{\partial}_{g_{i}}^*\overline{\partial}},
\end{equation}

Thus $F^{g_{i}}_{\overline{\partial}} ~(i=1,2)$ is a conformally invariant $0$-order pseudodifferential bounded operator.
For $n-$dimensional complex manifold,  we define the $2n$-form
$\Omega^{g_{1},g_{2}}_{\overline{\partial}}(f_1,f_2)$ by the following identity for $f_0, f_1, f_2\in C^{\infty}( M)$

\begin{equation}
Wres(f_0[F^{g_{1}}_{\overline{\partial}},f_1][F^{g_{2}}_{\overline{\partial}},f_2])
=\int_Mf_0\Omega^{g_{1},g_{1}}_{\overline{\partial}}(f_1,f_2).
\end{equation}

By Lemma 5.2, as in \cite{Co} or \cite{Ug1}, we have
\begin{thm}\label{th:53}
$\Omega^{g_{1},g_{2}}_{\overline{\partial}}(f_1,f_2)$ is a unique defined by (5.9), symmetric and conformally invariant
$2n$-differential form. Furthermore, $ \int_Mf_0\Omega^{g_{1},g_{2}}_{\overline{\partial}}(f_1,f_2)$ defines a
Hochschild $2$-cocycle on the algebra of smooth functions on $M$.
\end{thm}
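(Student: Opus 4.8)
The plan is to carry the proof of Theorem~2.1 over verbatim, with the $d$-operator replaced by $\overline{\partial}$ and its Fredholm module replaced by the one furnished by Lemma~5.2; once that module is in hand, all the structural assertions are formal, and the one genuinely technical point is the localization of the Wodzicki residue. First I would check that $\Omega^{g_{1},g_{2}}_{\overline{\partial}}(f_1,f_2)$ is a well defined $2n$-form. Since each $F^{g_i}_{\overline{\partial}}$ is a classical pseudodifferential operator of order $0$ on $A^{p,q}$ and each $f_i$ acts by multiplication, $[F^{g_i}_{\overline{\partial}},f_i]$ has order $\le -1$ and $f_0[F^{g_1}_{\overline{\partial}},f_1][F^{g_2}_{\overline{\partial}},f_2]$ has order $\le -2$, so the residue formula (2.6) applies with $k=2n=\dim_{\mathbb R}M$. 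Expanding the composition symbols by the Leibniz rule --- exactly as in the passage from the defining identity (2.8) to the explicit symbol formula of Section~3, and as in Lemmas~2.2--2.3 of \cite{Ug1} --- the fibrewise trace of the $(-2n)$-homogeneous symbol component of this operator is $f_0(x)$ times an expression built only from the homogeneous symbols of $F^{g_1}_{\overline{\partial}},F^{g_2}_{\overline{\partial}}$ and the $x$-derivatives of $f_1,f_2$; integrating it over the cosphere bundle $\{|\xi|=1\}$ produces the density $\Omega^{g_{1},g_{2}}_{\overline{\partial}}(f_1,f_2)$. Because $\mathrm{Wres}$ is a trace independent of local coordinates, of the local trivialization of $A^{p,q}$, and of the metric on the base, this density patches to a globally defined $2n$-form and (5.9) holds; uniqueness is then immediate, since $\int_M f_0(\Omega-\Omega')=0$ for all $f_0\in C^{\infty}(M)$ forces $\Omega=\Omega'$.

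Next I would establish double conformal invariance and symmetry. By Lemma~5.1 and Lemma~5.2, under the standing hypothesis $p+q=n$ both $L^2(A^{p,q})$ and $F^{g_i}_{\overline{\partial}}=2P_{\mathrm{Im}\overline{\partial}}-\mathrm{Id}$ depend on $g_i$ only through its conformal class: $F^{g_i}_{\overline{\partial}}$ preserves $H^{p,q}$, on $A^{p,q}\ominus H^{p,q}$ it is the Green-operator expression (5.8), and both $H^{p,q}$ and that expression are conformally invariant. Hence replacing each $g_i$ by a conformally equivalent metric does not change the operator $f_0[F^{g_1}_{\overline{\partial}},f_1][F^{g_2}_{\overline{\partial}},f_2]$, and since $\mathrm{Wres}$ ignores the metric altogether, the left side of (5.9) is unchanged; by the uniqueness just proved, $\Omega^{g_{1},g_{2}}_{\overline{\partial}}(f_1,f_2)$ is a double conformal invariant. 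Symmetry $\Omega^{g_{1},g_{2}}_{\overline{\partial}}(f_1,f_2)=\Omega^{g_{1},g_{2}}_{\overline{\partial}}(f_2,f_1)$ I would obtain as in \cite{Co,Ug1}: from $(F^{g_i}_{\overline{\partial}})^2=1$ one has the anticommutation $F^{g_i}_{\overline{\partial}}[F^{g_i}_{\overline{\partial}},f]=-[F^{g_i}_{\overline{\partial}},f]F^{g_i}_{\overline{\partial}}$, which together with the cyclicity of $\mathrm{Wres}$ interchanges the two orderings of $f_1$ and $f_2$.

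For the Hochschild cocycle property I would set $\Psi(f_0,f_1,f_2)=\mathrm{Wres}(f_0[S_1,f_1][S_2,f_2])$ with $S_i=F^{g_i}_{\overline{\partial}}$ and, using $[S,fh]=[S,f]h+f[S,h]$, expand $(b\Psi)(f_0,f_1,f_2,f_3)$ into four Wodzicki residues. The terms generated by $[S_1,f_1f_2]$ and $[S_2,f_2f_3]$ cancel the cross terms and leave $\mathrm{Wres}(f_0[S_1,f_1][S_2,f_2]f_3)-\mathrm{Wres}(f_3f_0[S_1,f_1][S_2,f_2])$, which vanishes by the trace property of $\mathrm{Wres}$; this is precisely equation (2.10) with $F^{g_i}_d$ replaced by $F^{g_i}_{\overline{\partial}}$. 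Thus $b\Psi=0$, so $\int_M f_0\Omega^{g_{1},g_{2}}_{\overline{\partial}}(f_1,f_2)$ is a Hochschild $2$-cocycle; and specializing $g_1=g_2$ recovers Connes' conformal invariant associated with $\overline{\partial}$.

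The hard part is the well-definedness step of the first paragraph: showing that $\mathrm{Wres}(f_0[F^{g_1}_{\overline{\partial}},f_1][F^{g_2}_{\overline{\partial}},f_2])$ really has the localized form $\int_M f_0\cdot(\text{a }2n\text{-form independent of }f_0)$. This rests on the pseudodifferential symbol calculus for operators on the bundle $A^{p,q}$ --- the order estimates above, the symbol-composition formula, and the observation that $f_0$ enters each homogeneous symbol component only as the scalar factor $f_0(x)$ --- together with the coordinate- and trivialization-independence of the residue density; it is the $\overline{\partial}$-analogue of the computations of \cite{Co} and of Section~4 of \cite{Ug1}. Everything else is a line-by-line transcription of the real case of Theorem~2.1.
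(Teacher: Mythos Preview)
Your proposal is correct and matches the paper's approach: the paper gives no separate proof of Theorem~5.3 at all, merely prefacing the statement with ``By Lemma~5.2, as in \cite{Co} or \cite{Ug1}, we have'', so the intended argument is exactly what you describe --- transcribe the proof of Theorem~2.1 with $d$ replaced by $\overline{\partial}$, invoking Lemma~5.2 for the conformally invariant Fredholm module and the Leibniz rule plus the trace property of $\mathrm{Wres}$ for the Hochschild cocycle identity (your computation of $b\Psi$ is verbatim equation~(2.10)). Your write-up in fact supplies more detail than the paper does on well-definedness and conformal invariance; the only point to watch is that your symmetry argument via $(F^{g_i}_{\overline{\partial}})^2=1$ is literally the single-metric argument of \cite{Co,Ug1} and does not obviously extend when $F^{g_1}\neq F^{g_2}$, but the paper itself simply asserts $\Omega^{g_1,g_2}(f_1,f_2)=\Omega^{g_1,g_2}(f_2,f_1)$ without proof in Theorem~2.1, so you are at the same level of rigor.
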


\section{The Computation of the Conformal Invariant $\Omega^{g_{1},g_{2}}_{\overline{\partial}}(f_1,f_2)$}
\label{6}
Next we compute the conformal invariant $\Omega^{g_{1},g_{2}}_{\overline{\partial}}(f_1,f_2)$. By Lemmas 2.2 and 2.3
in \cite{Ug1}, we have

\begin{equation}
\Omega^{g_{1},g_{2}}_{\overline{\partial}}(f_1,f_2)=\int_{|\xi|=1}{\rm tr}_{\wedge^{p,q}}
[\sum\frac{1}{\alpha'!\alpha''!\beta!\delta!}D^{\beta}_x({f_1}) D^{\alpha''+\delta}_x({f_2})
\partial^{\alpha'+\alpha''+\beta}_{\xi}(\sigma^{F^{g_{1}}_{\overline{\partial}}}_{-j})
\partial^{\delta}_{\xi}D^{\alpha'}_x(\sigma^{F^{g_{2}}_{\overline{\partial}}}_{-k})]\sigma(\xi)d^nx ,
\end{equation}
where $\wedge^{p,q}$ is the space of $(p,q)$-degree forms and $\sigma_{-j}^{F^{g_{1}}_{\overline{\partial}}},\sigma_{-j}^{F^{g_{2}}_{\overline{\partial}}}$
denotes the order $-j$ symbol of $F^{g_{1}}_{\overline{\partial}}, F^{g_{2}}_{\overline{\partial}}$;
$D^\beta_x=(-i)^{|\beta|}\partial^\beta_x$ and the sum is taken over $|\alpha'|+|\alpha''|+|\beta|+|\delta|+j+k=2n;
|\beta|\geq1,|\delta|\geq1; \alpha',\alpha'',\beta,\delta \in {\bf Z}^{2n}_+; j,k\in {\bf Z}_+.$
Recall Lemma 3.52 in \cite{Gi}, then we have

\begin{lem}\label{le:61}
The following equalities hold
\begin{equation}
\sigma_L(\overline{\partial})(x,\xi)=\frac{\sqrt{-1}}{2}\sum_{1\leq j\leq n}(\xi_j+\sqrt{-1}\xi_{j+n})\varepsilon(e^j-\sqrt{-1}e^{j+n});
\end{equation}

\begin{equation}
\sigma_L(\overline{\partial}^*)(x,\xi)=-\frac{\sqrt{-1}}{2}\sum_{1\leq j\leq n}(\xi_j-\sqrt{-1}\xi_{j+n})\iota(e^j-\sqrt{-1}e^{j+n});~
\sigma_L(\triangle)=\frac{1}{2}|\xi|^2{\rm Id}.
\end{equation}
\end{lem}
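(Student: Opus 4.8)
The plan is to compute the leading symbols $\sigma_L(\overline{\partial})$ and $\sigma_L(\overline{\partial}^*)$ directly from their definitions in local coordinates, exactly in the spirit of Gilkey's Lemma 3.52 in \cite{Gi}. First I would write $\overline{\partial}$ in terms of the real coordinates $x_1,\dots,x_{2n}$ adapted to the almost complex structure $J$ with $Je_i=e_{n+i}$. Using the relations $\lambda_j=\tfrac{1}{\sqrt 2}(e^j+\sqrt{-1}e^{n+j})$ and the dual frame $\phi_j=\tfrac{1}{\sqrt 2}(e_j-\sqrt{-1}e_{n+j})$, the operator $\overline{\partial}$ on a $(p,q)$-form acts, to leading order, by $\varepsilon(\overline{\lambda_j})\,\overline{\phi_j}$ summed over $j$ (the wedge with $d\bar z^j$ paired with the antiholomorphic derivative). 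Replacing the derivative by its symbol $\sqrt{-1}\xi$ and translating $\overline{\lambda_j}$ and $\overline{\phi_j}$ back into the real frame $e^k,e^{k+n}$, one collects the factor $(\xi_j+\sqrt{-1}\xi_{j+n})$ multiplying $\varepsilon(e^j-\sqrt{-1}e^{j+n})$; the scalar $\tfrac{\sqrt{-1}}{2}$ comes from the $\tfrac{1}{\sqrt 2}$ normalizations squared together with the $\sqrt{-1}$ from the Fourier symbol. This yields (6.2).

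Next, for $\sigma_L(\overline{\partial}^*)$ I would use that $\overline{\partial}^*$ is the formal adjoint of $\overline{\partial}$ with respect to the Hermitian inner product on $A^{p,q}$ fixed earlier (for which the $\lambda$-wedge monomials are orthonormal), so its leading symbol is the adjoint of $\sigma_L(\overline{\partial})$ with respect to that inner product. Since $\varepsilon(\omega)^* = \iota(\omega)$ for the metric-dual contraction, taking the adjoint of $\tfrac{\sqrt{-1}}{2}(\xi_j+\sqrt{-1}\xi_{j+n})\varepsilon(e^j-\sqrt{-1}e^{j+n})$ replaces $\varepsilon$ by $\iota$, conjugates the scalar coefficient $\tfrac{\sqrt{-1}}{2}$ to $-\tfrac{\sqrt{-1}}{2}$, and conjugates $(\xi_j+\sqrt{-1}\xi_{j+n})$ to $(\xi_j-\sqrt{-1}\xi_{j+n})$, giving (6.3). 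Finally, for $\sigma_L(\triangle)$ with $\triangle = \overline{\partial}\,\overline{\partial}^* + \overline{\partial}^*\overline{\partial}$, I would multiply the two symbols and use the anticommutation relation $\varepsilon(e^j-\sqrt{-1}e^{j+n})\iota(e^k-\sqrt{-1}e^{k+n}) + \iota(e^k-\sqrt{-1}e^{k+n})\varepsilon(e^j-\sqrt{-1}e^{j+n}) = \langle e^j-\sqrt{-1}e^{j+n}, \overline{e^k-\sqrt{-1}e^{k+n}}\rangle\,{\rm Id}$, which equals $2\delta_{jk}$. Combining this with $(\xi_j+\sqrt{-1}\xi_{j+n})(\xi_j-\sqrt{-1}\xi_{j+n}) = \xi_j^2+\xi_{j+n}^2$ and the scalar $\tfrac{\sqrt{-1}}{2}\cdot(-\tfrac{\sqrt{-1}}{2}) = \tfrac14$, summing over $j$ collapses the symbol to $\tfrac14 \cdot 2 \cdot |\xi|^2\,{\rm Id} = \tfrac12|\xi|^2{\rm Id}$.

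The main obstacle I anticipate is bookkeeping rather than conceptual: keeping the $\sqrt 2$ normalizations, the $\sqrt{-1}$'s, and the distinction between the frame $e^j$ on $T^{{\bf R},*}M$ versus $e_j$ on $T^{{\bf R}}M$ consistent throughout, and in particular verifying that the interior-multiplication operator $\iota$ used here is the one dual to the Hermitian (not merely Riemannian) pairing, since that is what makes the adjoint computation for $\overline{\partial}^*$ come out with the stated conjugated coefficients. Once the conventions are pinned down — which is where I would be most careful — each of the three formulas is a short direct computation, and the factor $\tfrac12$ in $\sigma_L(\triangle)$ is exactly the consistency check that the normalizations were chosen correctly.
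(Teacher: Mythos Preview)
Your proposal is correct and is essentially what the paper does: the paper simply cites Lemma~3.52 in \cite{Gi} and states the result without further argument, so your direct symbol computation in the real frame, the adjoint argument for $\overline{\partial}^*$, and the anticommutator check for $\triangle$ are exactly the verification that citation stands in for. Your flagged bookkeeping concern about the Hermitian versus Riemannian $\iota$ is the only place requiring care, and you have identified it correctly.
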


\begin{lem}\label{le:62}
When $g_{i}~(i=1,2)$ is flat, we have for $\xi\neq 0$
\begin{equation}
\sigma(\overline{\partial})=\sigma_L(\overline{\partial}); \sigma(\overline{\partial}^*)=\sigma_L(\overline{\partial}^*);
~\sigma(F^{g_{i}}_{\overline{\partial}})=\sigma_L(F^{g_{i}}_{\overline{\partial}})=2|\xi|^{-2}
[\sigma^{g_{i}}_L(\overline{\partial})\sigma^{g_{i}}_L(\overline{\partial}^*)
-\sigma^{g_{i}}_L(\overline{\partial}^*)\sigma^{g_{i}}_L(\overline{\partial})].
\end{equation}
\end{lem}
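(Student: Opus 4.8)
The plan is to argue exactly as in the real flat case, Lemma~\ref{le:31}. The starting point is that flatness of $g_i$ allows one to choose, near any point, local coordinates in which $g_i$ is the standard flat Hermitian metric; in such coordinates $\overline\partial$ is a constant-coefficient operator (it already is in any holomorphic chart) and so is its formal adjoint $\overline\partial^*_{g_i}=\widehat c\,\star\partial\star$, since the Hodge $\star$ of $g_i$ now has constant coefficients as well. A constant-coefficient differential operator has no lower-order terms in its symbol, so its full symbol equals its leading symbol; this gives at once $\sigma(\overline\partial)=\sigma_L(\overline\partial)$ and $\sigma(\overline\partial^*)=\sigma_L(\overline\partial^*)$, with the explicit leading symbols furnished by Lemma~\ref{le:61}.

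Next I would treat $F^{g_i}_{\overline\partial}$. On $A^{p,q}\ominus H^{p,q}$ it equals $\bigl(\overline\partial\,\overline\partial^*_{g_i}-\overline\partial^*_{g_i}\overline\partial\bigr)\triangle^{-1}$, where $\triangle=\overline\partial\,\overline\partial^*_{g_i}+\overline\partial^*_{g_i}\overline\partial$ is the $\overline\partial$-Laplacian and commutes with the numerator. In the chosen coordinates every factor has constant coefficients, so $\triangle$ acts as the Fourier multiplier $\tfrac12|\xi|^2\,{\rm Id}$ (this is the last identity of Lemma~\ref{le:61}, now exact and not merely to leading order), whence, on the orthogonal complement of the harmonic forms, $\triangle^{-1}$ is, modulo a smoothing operator, the Fourier multiplier $2|\xi|^{-2}\,{\rm Id}$ for $\xi\neq0$. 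Likewise $\overline\partial\,\overline\partial^*_{g_i}-\overline\partial^*_{g_i}\overline\partial$ is a constant-coefficient operator of order $2$, so its full symbol equals its leading symbol $\sigma^{g_i}_L(\overline\partial)\sigma^{g_i}_L(\overline\partial^*)-\sigma^{g_i}_L(\overline\partial^*)\sigma^{g_i}_L(\overline\partial)$, with no correction terms entering the composition because the coefficients are constant.

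Multiplying the two symbols then yields
\[
\sigma(F^{g_i}_{\overline\partial})=2|\xi|^{-2}\bigl[\sigma^{g_i}_L(\overline\partial)\sigma^{g_i}_L(\overline\partial^*)-\sigma^{g_i}_L(\overline\partial^*)\sigma^{g_i}_L(\overline\partial)\bigr],
\]
which is homogeneous of degree $0$ in $\xi$; hence it coincides with $\sigma_L(F^{g_i}_{\overline\partial})$, and all the lower-order symbols $\sigma^{F^{g_i}_{\overline\partial}}_{-k}$ with $k>0$ vanish, which is precisely the assertion of the lemma.

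I expect the only delicate point to be the bookkeeping around $\triangle^{-1}$, which a priori is defined only on $A^{p,q}\ominus H^{p,q}$ and not as a genuine pseudodifferential operator on all of $A^{p,q}$. This is handled exactly as in \cite{Co}, \cite{Ug1} and in the proof of Lemma~\ref{le:31}: one replaces $\triangle^{-1}$ by a parametrix $Q$ with $Q\triangle-{\rm Id}$ smoothing and observes that neither the symbol computation nor the Wodzicki residue is affected by smoothing perturbations or by the finite-rank projection onto $H^{p,q}$. With that understood, everything else is a routine constant-coefficient symbol calculation.
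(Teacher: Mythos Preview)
Your argument is correct and is precisely the reasoning the paper has in mind: the lemma is stated without proof (parallel to Lemma~\ref{le:31}, which the paper likewise records as a direct consequence of Lemma~3.52 in \cite{Gi}), and your constant-coefficient symbol calculation in flat coordinates, together with the parametrix remark for $\triangle^{-1}$, supplies exactly the missing details.
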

We assume that $g_{i}~(i=1,2)$ is flat. Similarly to Lemma 6.1 in \cite{Ug1}, then we have

\begin{lem}\label{le:63}
The following equality holds
\begin{equation}
\Omega^{g_{1},g_{2}}_{\overline{\partial},{\rm flat}}(f_1,f_2)=\int_{|\xi|=1}{\rm tr}_{\wedge^{p,q}}
[\sum\frac{1}{\alpha!\beta!\delta!}D^{\beta}_x({f_1})D^{\alpha+\delta}_x({f_2})
\partial^{\alpha+\beta}_{\xi}(\sigma_L(F^{g_{1}}_{\overline{\partial}}))
\partial^{\delta}_{\xi}(\sigma_L(F^{g_{2}}_{\overline{\partial}}))]\sigma(\xi)d^nx,
\end{equation}
where the sum is taken over $|\alpha|+|\beta|+|\delta|=2n; ~1\leq |\beta|; ~1\leq |\delta|.$
\end{lem}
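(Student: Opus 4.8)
The plan is to follow the template already established for Lemma~6.1 in \cite{Ug1} and, in the present excerpt, for Lemma~\ref{le:32} in the real case; the complex case is essentially a bookkeeping variant of it. First I would start from the general symbol formula \eqref{6.1} for $\Omega^{g_{1},g_{2}}_{\overline{\partial}}$, in which the multi-index sum runs over $|\alpha'|+|\alpha''|+|\beta|+|\delta|+j+k=2n$ with $|\beta|,|\delta|\ge 1$. The point of assuming $g_{i}$ flat is Lemma~\ref{le:62}: in the flat case $F^{g_{i}}_{\overline{\partial}}$ has \emph{no} lower-order symbol, i.e. $\sigma_{-j}^{F^{g_{i}}_{\overline{\partial}}}=0$ for all $j>0$, and $\sigma(F^{g_{i}}_{\overline{\partial}})=\sigma_L(F^{g_{i}}_{\overline{\partial}})$ is the explicit $0$-order expression given there. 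Substituting this into \eqref{6.1} kills every term with $j>0$ or $k>0$, so the sum collapses to $j=k=0$ and the constraint becomes $|\alpha'|+|\alpha''|+|\beta|+|\delta|=2n$.

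Next I would eliminate one of the two $x$-derivative multi-indices. The terms $\partial^{\alpha'+\alpha''+\beta}_{\xi}(\sigma_L(F^{g_{1}}_{\overline{\partial}}))$ and $\partial^{\delta}_{\xi}D^{\alpha'}_x(\sigma_L(F^{g_{2}}_{\overline{\partial}}))$ involve $D^{\alpha'}_x$ acting on $\sigma_L(F^{g_{2}}_{\overline{\partial}})$; but in the flat case the leading symbol $\sigma_L(F^{g_{i}}_{\overline{\partial}})(x,\xi)$ built from \eqref{6.4}, \eqref{6.2}, \eqref{6.3} is \emph{independent of $x$} (the coefficients are constants in flat coordinates), so $D^{\alpha'}_x\bigl(\sigma_L(F^{g_{2}}_{\overline{\partial}})\bigr)=0$ unless $\alpha'=0$. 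Hence only $\alpha'=0$ survives, $\alpha''$ may be relabelled $\alpha$, and the surviving constraint is $|\alpha|+|\beta|+|\delta|=2n$, $|\beta|,|\delta|\ge 1$, with the $\xi$-derivatives now reading $\partial^{\alpha+\beta}_{\xi}(\sigma_L(F^{g_{1}}_{\overline{\partial}}))$ and $\partial^{\delta}_{\xi}(\sigma_L(F^{g_{2}}_{\overline{\partial}}))$, exactly as claimed in \eqref{6.5}. The combinatorial coefficient $\frac{1}{\alpha'!\alpha''!\beta!\delta!}$ reduces to $\frac{1}{\alpha!\beta!\delta!}$, and the measure $\sigma(\xi)d^nx$ is unchanged. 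One small point to double-check is that, on a $2n$-real-dimensional complex manifold, the Wodzicki-residue sphere volume is $\sigma(\xi)$ on $\{|\xi|=1\}\subset T^{*}_xM$ and the ambient dimension appearing in the constraint is $2n$, not $n$; the notation $d^{n}x$ in \eqref{6.5} should be read as the real volume element, consistently with \eqref{6.1}.

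The main obstacle, such as it is, is the justification that $\sigma_L(F^{g_{i}}_{\overline{\partial}})$ is genuinely $x$-independent in the flat case and that no lower-order symbol contributions are smuggled back in through the $\xi$-derivatives: this is exactly the content of Lemma~\ref{le:62} together with the flat-coordinate form of Lemma~\ref{le:61}, so the argument is really a citation of the flat-case analysis of \cite{Ug1} adapted to $(p,q)$-forms with $p+q=n$. I expect no new difficulty beyond checking that the passage $A^{p}\rightsquigarrow \wedge^{p,q}$ and the replacement of $d,d^{*}$ by $\overline{\partial},\overline{\partial}^{*}$ does not affect the vanishing of lower-order symbols — which it does not, since $F^{g_{i}}_{\overline{\partial}}$ in \eqref{6.4} is a ratio of operators whose symbols are polynomial in $\xi$ with constant (flat) coefficients. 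Thus the proof is: invoke Lemma~\ref{le:62} to drop all $j,k>0$ terms; invoke $x$-independence of the flat leading symbols to force $\alpha'=0$; collect the surviving terms and read off \eqref{6.5}.
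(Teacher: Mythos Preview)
Your proposal is correct and matches the paper's approach exactly: the paper does not write out a proof for Lemma~\ref{le:63} but simply says ``Similarly to Lemma 6.1 in \cite{Ug1}'', and your outline---use Lemma~\ref{le:62} to kill all $j,k>0$ terms in \eqref{6.1}, then use $x$-independence of the flat leading symbol to force $\alpha'=0$ and relabel $\alpha''$ as $\alpha$---is precisely the argument behind that citation and behind the real-case Lemma~\ref{le:32}.
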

Let
\begin{equation}
\psi(\xi,\eta)={\rm Tr}_{\wedge^{p,q}}[\sigma_L(F^{g_{1}}_{\overline{\partial}})(x,\xi)\sigma_L(F^{g_{2}}_{\overline{\partial}})(x,\eta)];
\end{equation}

\begin{equation}
T_{2n}'\psi(\xi,\eta,u,v)=\sum\frac{u^{\beta}}{\beta!}\frac{v^{\delta}}{\delta!}{\rm tr}[\partial^\beta_\xi
(\sigma_L(F^{g_{1}}_{\overline{\partial}})(\xi))\partial^\delta_\eta(\sigma_L(F^{g_{2}}_{\overline{\partial}})(\eta))],
\end{equation}
where the sum is taken over $|\beta|+|\delta|=2n,~|\beta|\geq 1,~|\delta|\geq 1$. $T_{2n}'(\xi,\eta,u,v)$ is the
term of order $2n$ in the Taylor expansion of $\psi(\xi+u,\eta+v)$ at $u=v=0$ minus the terms with only powers
 of $u$ or only powers of $v$. As in Section 4 in \cite{Ug1}, we have

\begin{thm}\label{th:64}
\begin{equation}
\Omega^{g_{1},g_{2}}_{\overline{\partial},{\rm flat}}(f_1,f_2)=(\sum A_{a, b}(D^a_xf_1)(D^b_xf_2))dx,
\end{equation}
where
\begin{equation}
\sum A_{a, b}u^av^b=\int_{|\xi|=1}(T_{2n}'\psi(\xi,\xi,u+v,v)-T_{2n}'\psi(\xi,\xi,v,v))\sigma(\xi).
\end{equation}
\end{thm}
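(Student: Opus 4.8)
The plan is to follow Section~4 of \cite{Ug1} and to view Theorem~3.4 (the complex analogue, numbered 6.4 here) as nothing more than a combinatorial reorganization of the finite sum delivered by Lemma~6.3. That lemma already writes $\Omega^{g_{1},g_{2}}_{\overline{\partial},{\rm flat}}(f_1,f_2)$ as a sum over multi-indices with $|\alpha|+|\beta|+|\delta|=2n$, $|\beta|\geq1$, $|\delta|\geq1$, of $\frac{1}{\alpha!\beta!\delta!}D^{\beta}_x(f_1)D^{\alpha+\delta}_x(f_2)$ times $\int_{|\xi|=1}{\rm tr}_{\wedge^{p,q}}[\partial^{\alpha+\beta}_{\xi}(\sigma_L(F^{g_{1}}_{\overline{\partial}}))\,\partial^{\delta}_{\xi}(\sigma_L(F^{g_{2}}_{\overline{\partial}}))]\sigma(\xi)$, so the only thing to do is to regroup this sum according to the multi-degrees $a,b$ of the derivatives that land on $f_1$ and on $f_2$, and then to recognize the resulting coefficients inside $\int_{|\xi|=1}(T_{2n}'\psi(\xi,\xi,u+v,v)-T_{2n}'\psi(\xi,\xi,v,v))\sigma(\xi)$. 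I expect no analytic issue at all here: everything reduces to the binomial theorem together with a careful tracking of the summation conditions.

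Concretely, first I would set $a=\beta$ and $b=\alpha+\delta$ in the expression of Lemma~6.3; since then $|\alpha|+|\beta|+|\delta|=2n$ reads $|a|+|b|=2n$ and $|\beta|\geq1$ reads $|a|\geq1$, the collection of terms with fixed $(a,b)$ gives
\begin{equation}
\Omega^{g_{1},g_{2}}_{\overline{\partial},{\rm flat}}(f_1,f_2)=\Big(\sum_{a,b}A_{a,b}(D^a_xf_1)(D^b_xf_2)\Big)dx,\qquad A_{a,b}=\int_{|\xi|=1}\sum_{\substack{\alpha+\delta=b,\ |a|+|b|=2n\\ |\delta|\geq1}}\frac{1}{\alpha!\,a!\,\delta!}\,{\rm tr}_{\wedge^{p,q}}\big[\partial^{\alpha+a}_\xi(\sigma_L(F^{g_{1}}_{\overline{\partial}}))\,\partial^{\delta}_\xi(\sigma_L(F^{g_{2}}_{\overline{\partial}}))\big]\sigma(\xi).
\end{equation}
On the other side, I would put $\eta=\xi$ in the definition of $T_{2n}'\psi$ and expand using $\frac{(u+v)^{\beta}}{\beta!}=\sum_{\alpha'+\alpha''=\beta}\frac{u^{\alpha'}}{\alpha'!}\frac{v^{\alpha''}}{\alpha''!}$, obtaining
\begin{equation}
T_{2n}'\psi(\xi,\xi,u+v,v)=\sum_{\substack{|\beta|+|\delta|=2n\\ |\beta|\geq1,\ |\delta|\geq1}}\ \sum_{\alpha'+\alpha''=\beta}\frac{u^{\alpha'}}{\alpha'!}\frac{v^{\alpha''}}{\alpha''!}\frac{v^{\delta}}{\delta!}\,{\rm tr}_{\wedge^{p,q}}\big[\partial^{\beta}_\xi(\sigma_L(F^{g_{1}}_{\overline{\partial}}))\,\partial^{\delta}_\xi(\sigma_L(F^{g_{2}}_{\overline{\partial}}))\big].
\end{equation}
Because $T_{2n}'\psi(\xi,\xi,v,v)$ is exactly the value of this right-hand side at $u=0$, subtracting it keeps precisely the summands with $|\alpha'|\geq1$, and for those the condition $|\beta|\geq1$ is automatic.

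The last step is the identification. In the surviving sum I would relabel $a=\alpha'$, $\alpha=\alpha''$, $\delta=\delta$, so that $\beta=\alpha+a$, $b=\alpha+\delta$; the coefficient $\frac{1}{\alpha'!\alpha''!\delta!}$ becomes $\frac{1}{a!\,\alpha!\,\delta!}$, the $\xi$-derivatives become $\partial^{\alpha+a}_\xi$ on $\sigma_L(F^{g_{1}}_{\overline{\partial}})$ and $\partial^{\delta}_\xi$ on $\sigma_L(F^{g_{2}}_{\overline{\partial}})$, and the constraints $|\beta|+|\delta|=2n$, $|\alpha'|\geq1$, $|\delta|\geq1$ become $|a|+|b|=2n$, $|a|\geq1$, $|\delta|\geq1$; reading off the coefficient of $u^av^b$ and integrating over $|\xi|=1$ then reproduces exactly the $A_{a,b}$ above, i.e.\ $\sum_{a,b}A_{a,b}u^av^b=\int_{|\xi|=1}(T_{2n}'\psi(\xi,\xi,u+v,v)-T_{2n}'\psi(\xi,\xi,v,v))\sigma(\xi)$, which is the assertion. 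The single point that needs care — and essentially the only thing I would call an obstacle — is this index bookkeeping: one must notice that subtracting $T_{2n}'\psi(\xi,\xi,v,v)$ is exactly what cancels the spurious pure-$v$ monomials created by the shift $u\mapsto u+v$, and that this cancellation is precisely what converts the summation condition $|\beta|\geq1$ built into the definition of $T_{2n}'\psi$ into the condition $|a|\geq1$ inherited from the constraint $|\beta|\geq1$ in Lemma~6.3. Everything else is routine, exactly as in Section~4 of \cite{Ug1}.
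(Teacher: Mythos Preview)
Your proposal is correct and is precisely the argument the paper defers to when it writes ``As in Section~4 in \cite{Ug1}'': you start from Lemma~6.3, regroup by $(a,b)=(\beta,\alpha+\delta)$, expand $(u+v)^\beta/\beta!$ multinomially, and observe that subtracting $T_{2n}'\psi(\xi,\xi,v,v)$ kills exactly the $\alpha'=0$ terms so that the surviving constraint $|\alpha'|\geq1$ matches $|\beta|\geq1$ from Lemma~6.3. The paper gives no independent proof here, so there is nothing further to compare.
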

By Theorem 6.4, to obtain an explicit expression for $\Omega^{g_{1},g_{2}}_{\overline{\partial},{\rm flat}}$, it is
necessary to study ${\rm Tr}_{\wedge^{p,q}}[\sigma_L(F^{g_{1}}_{\overline{\partial}})(\xi)$ $\sigma_L(F^{g_{2}}_{\overline{\partial}})(\eta)]$
for $\xi$ and $\eta$ not zero in $T^{R,*}_xM$. For $\xi=\sum_{1\leq i \leq 2n}\xi_ie^i$ , we let

\begin{equation}
\widehat{\xi}=\sum_{1\leq j\leq n}(\xi_j+\sqrt{-1}\xi_{j+n})(e^j-\sqrt{-1}e^{j+n}).
\end{equation}

For a $(0,1)$-form $\omega_1$, we have

\begin{eqnarray}
\iota^{g_{i}}((\xi_j+\sqrt{-1}\xi_{j+n})(e^j-\sqrt{-1}e^{j+n}))\omega_1&=&\langle\omega_1,(\xi_j+\sqrt{-1}\xi_{j+n})(e^j-\sqrt{-1}e^{j+n})\rangle \nonumber\\
                                                               &=&(\xi_j-\sqrt{-1}\xi_{j+n})\iota^{g_{i}}(e^j-\sqrt{-1}e^{j+n})\omega_1.
\end{eqnarray}

By Lemma 6.1 and (6.11), we have
\begin{equation}
\sigma_L^{g_{i}}(\overline{\partial})(\xi)=\frac{\sqrt{-1}}{2}\varepsilon(\widehat{\xi});~
\sigma_L^{g_{i}}(\overline{\partial}^*)(\xi)=-\frac{\sqrt{-1}}{2}\iota^{g_{i}}(\widehat{\xi}).
\end{equation}

\begin{thm}\label{th:65}
For $q=1, p+1=n$
\begin{equation}
{\rm Tr}_{\wedge^{p,1}}[\sigma_L(F^{g_{1}}_{\overline{\partial}})(\xi)\sigma_L(F^{g_{2}}_{\overline{\partial}})(\eta)]
=C^p_n\frac{\overline{\langle\widehat{\xi},\widehat{\eta}\rangle}_{g_{1}}\langle\widehat{\xi},\widehat{\eta}\rangle_{g_{2}}}{|\xi|_{g_{1}}^2|\eta|_{g_{2}}^2}
+C_n^pC_n^1-4C^p_n.
\end{equation}

For $q=2, p+2=n$
\begin{eqnarray}
&&{\rm Tr}_{\wedge^{p,2}}[\sigma_L(F^{g_{1}}_{\overline{\partial}})(\xi)\sigma_L(F^{g_{2}}_{\overline{\partial}})(\eta)]\nonumber\\
&=&\frac{\overline{\langle\widehat{\xi},\widehat{\eta}\rangle}_{g_{1}}\langle\widehat{\xi},\widehat{\eta}\rangle_{g_{2}}(2C_n^pA_{n,1}-C_n^pC_n^1)}
{|\xi|_{g_{1}}^2|\eta|_{g_{2}}^2}+4C_n^p+C_n^pC_n^2-4C_n^pA_{n,1}.
\end{eqnarray}

For $q\geq 3, p+q=n$
\begin{equation}
{\rm Tr}_{\wedge^{p,q}}[\sigma_L(F^{g_{1}}_{\overline{\partial}})(\xi)\sigma_L(F^{g_{2}}_{\overline{\partial}})(\eta)]
=a_{p,q}\frac{\overline{\langle\widehat{\xi},\widehat{\eta}\rangle}_{g_{1}}\langle\widehat{\xi},\widehat{\eta}\rangle_{g_{2}}}{|\xi|_{g_{1}}^2|\eta|_{g_{2}}^2}+b_{p,q},
\end{equation}
where $\langle\widehat{\xi},\widehat{\eta}\rangle_{g_{i}}(i=1,2)$ represents the inner product $g^{{\bf C},*}(\widehat{\xi},\widehat{\eta})$,
$a_{p,q}$ and $b_{p,q}$ are some constants, $C_n^p$ is a combinatorial number.
\end{thm}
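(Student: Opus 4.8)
The strategy is to reduce the computation on $\wedge^{p,q}$ to one on the anti-holomorphic exterior algebra $\wedge^{0,q}$, where the argument becomes a verbatim copy of the proof of Theorem 3.4, and then to pin down the two unknown constants by evaluating at special $\xi,\eta$.

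First I would use Lemma 6.2 together with (6.13) to write, in the flat case,
\begin{equation}
\sigma_L(F^{g_i}_{\overline{\partial}})(\xi)=\frac{1}{2|\xi|_{g_i}^2}\bigl(\varepsilon(\widehat{\xi})\iota^{g_i}(\widehat{\xi})-\iota^{g_i}(\widehat{\xi})\varepsilon(\widehat{\xi})\bigr),\qquad i=1,2.
\end{equation}
Since $\widehat{\xi}$ is a $(0,1)$-form, both $\varepsilon(\widehat{\xi})$ and $\iota^{g_i}(\widehat{\xi})$ leave the holomorphic degree untouched, so under the identification $\wedge^{p,q}\cong\wedge^{p,0}\otimes\wedge^{0,q}$ the operator $\sigma_L(F^{g_i}_{\overline{\partial}})(\xi)$ is of the form ${\rm Id}_{\wedge^{p,0}}\otimes(\,\cdot\,)$, whence
\begin{equation}
{\rm Tr}_{\wedge^{p,q}}[\sigma_L(F^{g_1}_{\overline{\partial}})(\xi)\sigma_L(F^{g_2}_{\overline{\partial}})(\eta)]=C_n^p\,{\rm Tr}_{\wedge^{0,q}}[\sigma_L(F^{g_1}_{\overline{\partial}})(\xi)\sigma_L(F^{g_2}_{\overline{\partial}})(\eta)].
\end{equation}
The remaining problem is now the one treated in Section 3, with the real $p$-form space replaced by $\wedge^{0,q}$ (the exterior algebra on the $n$ generators $\overline{\lambda_1},\dots,\overline{\lambda_n}$) and $\xi$ replaced by the complex covector $\widehat{\xi}$.

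On $\wedge^{0,q}$ the pair $\varepsilon(\cdot),\iota^{g_i}(\cdot)$ obeys the Clifford-type relation $\varepsilon_{q-1}(\widehat{\xi})\iota^{g_1}_q(\widehat{\eta})+\iota^{g_1}_{q+1}(\widehat{\eta})\varepsilon_q(\widehat{\xi})=\langle\widehat{\xi},\widehat{\eta}\rangle_{g_1}I_q$, so I would carry over the three ingredients of the proof of Theorem 3.4: (i) the analogue of Lemma 3.5, ${\rm tr}_{\wedge^{0,q}}[\iota^{g_1}_{q+1}(\widehat{\eta})\varepsilon_q(\widehat{\xi})]=\overline{\langle\widehat{\xi},\widehat{\eta}\rangle}_{g_1}A_{n,q}$ with $A_{n,q}=\sum_{j=0}^q(-1)^jC_n^j$, proved by the trace identity and induction on $q$ from $q=0$; (ii) the quantity $a_q(\widehat{\xi}_1,\widehat{\xi}_2,\widehat{\eta}_1,\widehat{\eta}_2)={\rm tr}_{\wedge^{0,q}}[\varepsilon_{q-1}(\widehat{\xi}_1)\iota^{g_1}_q(\widehat{\xi}_2)\varepsilon_{q-1}(\widehat{\eta}_1)\iota^{g_2}_q(\widehat{\eta}_2)]$, for which one derives $a_{q+1}(\widehat{\eta}_1,\widehat{\xi}_2,\widehat{\xi}_1,\widehat{\eta}_2)=a_q(\widehat{\xi}_1,\widehat{\xi}_2,\widehat{\eta}_1,\widehat{\eta}_2)+\langle\widehat{\xi}_1,\widehat{\xi}_2\rangle_{g_1}\langle\widehat{\eta}_1,\widehat{\eta}_2\rangle_{g_2}(2A_{n,q}-C_n^q)$ together with the base case $a_1(\widehat{\xi}_1,\widehat{\xi}_2,\widehat{\eta}_1,\widehat{\eta}_2)=\langle\widehat{\xi}_1,\widehat{\eta}_2\rangle_{g_2}\langle\widehat{\xi}_2,\widehat{\eta}_1\rangle_{g_1}$; and (iii) the expansion ${\rm Tr}_{\wedge^{0,q}}[\sigma_L(F^{g_1}_{\overline{\partial}})(\xi)\sigma_L(F^{g_2}_{\overline{\partial}})(\eta)]=a_q(\widehat{\xi},\widehat{\xi},\widehat{\eta},\widehat{\eta})\,|\xi|_{g_1}^{-2}|\eta|_{g_2}^{-2}-4A_{n,q-1}+C_n^q$, obtained as in (3.17)--(3.21) after writing $\varepsilon\iota-\iota\varepsilon=2\varepsilon\iota-\langle\widehat{\xi},\widehat{\xi}\rangle$ and using $\langle\widehat{\xi},\widehat{\xi}\rangle_{g_i}=2|\xi|_{g_i}^2$. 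Feeding (ii) into (iii) and inducting on $q$ forces ${\rm Tr}_{\wedge^{0,q}}[\cdots]$ to have the form $\widehat{a}_q\,\overline{\langle\widehat{\xi},\widehat{\eta}\rangle}_{g_1}\langle\widehat{\xi},\widehat{\eta}\rangle_{g_2}\,|\xi|_{g_1}^{-2}|\eta|_{g_2}^{-2}+\widehat{b}_q$, and multiplying by $C_n^p$ gives (6.16) with $a_{p,q}=C_n^p\widehat{a}_q$, $b_{p,q}=C_n^p\widehat{b}_q$.

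It then remains to evaluate $\widehat{a}_q,\widehat{b}_q$ for $q=1,2$ by two specializations, exactly as in the proof of Theorem 3.4. Taking $\xi=\eta=e^j$ a member of an orthonormal basis, $\overline{\langle\widehat{\xi},\widehat{\xi}\rangle}_{g_1}\langle\widehat{\xi},\widehat{\xi}\rangle_{g_2}=4|\xi|_{g_1}^2|\xi|_{g_2}^2$ while the trace equals $\dim\wedge^{p,q}=C_n^pC_n^q$, which gives $4a_{p,q}+b_{p,q}=C_n^pC_n^q$; taking $\xi,\eta$ to be two distinct members of a $g_2$-orthonormal basis makes $\langle\widehat{\xi},\widehat{\eta}\rangle_{g_2}=0$, so only $b_{p,q}$ survives, and the surviving operator acts on each basic $(0,q)$-form $\overline{\lambda_{j_1}}\wedge\cdots\wedge\overline{\lambda_{j_q}}$ by $+1$ or $-1$ according to the analogues of (3.23)--(3.34) (whose $\widehat{\xi}\wedge(\cdots)$ pieces are irrelevant to the trace); counting the $C^{q-2}_{n-2}$ basic forms containing both distinguished directions, the $C^q_{n-2}$ containing neither, and the $2C^{q-1}_{n-2}$ containing exactly one yields $b_{p,q}=C_n^p(C^{q-2}_{n-2}+C^q_{n-2}-2C^{q-1}_{n-2})$. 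Solving these two relations and substituting $A_{n,1}=C_n^1-C_n^0$ (so that the $C^{q-2}_{n-2}$ term drops when $q=1$) gives (6.14) and (6.15); for $q\ge3$ the identical procedure produces constants $a_{p,q},b_{p,q}$ of the shape asserted in (6.16). The main obstacle is steps (i)--(ii): one must track that $\iota^{g_i}(\widehat{\xi})$ is conjugate-linear in $\widehat{\xi}$ through the Hermitian metric, which is precisely what produces the complex conjugate on the $g_1$-factor throughout (6.14)--(6.16) and must be propagated consistently through the recursion; the secondary difficulty is the bookkeeping of the trace-irrelevant terms in the specialization, as in (3.28) and (3.31)--(3.33).
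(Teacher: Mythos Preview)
Your proposal is correct and its backbone (the Clifford relation (6.17), the trace identity that becomes Lemma~6.6, the four–variable quantity $a_q$ with its recursion, and the resulting expression ${\rm Tr}=|\xi|_{g_1}^{-2}|\eta|_{g_2}^{-2}a_q(\widehat\xi,\widehat\xi,\widehat\eta,\widehat\eta)+{\rm const}$) is exactly the skeleton of the paper's proof. The paper works directly on $\wedge^{p,q}$ rather than first peeling off the holomorphic factor, but this only means the factor $C_n^p$ is carried inside Lemma~6.6 instead of being extracted at the outset; the two organizations are equivalent.

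Where you diverge is in the last step. The paper does \emph{not} use the specialization argument of Theorem~3.4 here: having the recursion (6.23) and the base value (6.24), it simply evaluates $B_{p,1}(\widehat\xi,\widehat\xi,\widehat\eta,\widehat\eta)$ and $B_{p,2}(\widehat\xi,\widehat\xi,\widehat\eta,\widehat\eta)$ directly (equations (6.26)--(6.27)) and reads off (6.13)--(6.14); for $q\ge3$ it just invokes the recursion to assert the form (6.15) without pinning down the constants. Your route---choosing $\xi=\eta=e^j$ to obtain $4a_{p,q}+b_{p,q}=C_n^pC_n^q$, and $\xi=e^j,\eta=e^k$ with $1\le j<k\le n$ to obtain $b_{p,q}=C_n^p\bigl(C_{n-2}^{q-2}+C_{n-2}^q-2C_{n-2}^{q-1}\bigr)$---is a legitimate alternative and actually gives closed formulas for all $q$, which the paper does not. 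One small care point: the swap in the recursion sends $(\widehat\xi,\widehat\xi,\widehat\eta,\widehat\eta)\mapsto(\widehat\eta,\widehat\xi,\widehat\xi,\widehat\eta)$ and back only after two steps, so the inductive form you assert really rests on computing both $a_1$ and $a_2$ (equivalently $B_{p,1}$ and $B_{p,2}$); your specialization handles this, but it is worth making the two–step nature of the induction explicit. Your remark about tracking the Hermitian conjugation is exactly the point that distinguishes this computation from Theorem~3.4.
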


\begin{lem}\label{le:66}
For $\widetilde{\xi},\widetilde{\eta}\in A_{0,1},$
\begin{equation}
\rm tr(\iota^{g_{1}}_{p,q+1}(\widetilde{\eta})\varepsilon_{p,q}(\widetilde{\xi}))=\langle\widetilde{\xi},\widetilde{\eta}\rangle_{g_{1}}C_n^pA_{n,q},
\end{equation}
where $A_{n,q}=C_n^q-C_n^{q-1}+\cdots +(-1)^q C_n^0$ and $\varepsilon_{p,q} $ denotes the operator $ \varepsilon $ acting
on the space of $(p,q)$ forms.
\end{lem}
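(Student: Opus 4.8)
\textbf{Proof proposal for Lemma \ref{le:66}.}

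The plan is to mimic the argument used for Lemma \ref{le:35} (the real case), replacing the combinatorial count of basic $m$-forms by the count of basic $(p,q)$-forms. First I would record the key algebraic identity: for $\widetilde{\xi},\widetilde{\eta}\in A_{0,1}$, the exterior and interior multiplications satisfy the anticommutation relation
\begin{equation}
\varepsilon_{p,q-1}(\widetilde{\xi})\iota^{g_{1}}_{p,q}(\widetilde{\eta})
+\iota^{g_{1}}_{p,q+1}(\widetilde{\eta})\varepsilon_{p,q}(\widetilde{\xi})
=\langle\widetilde{\xi},\widetilde{\eta}\rangle_{g_{1}}I_{p,q},
\end{equation}
where $I_{p,q}$ is the identity on $A^{p,q}$. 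This is the $(p,q)$-form analogue of (3.14); it holds because $\varepsilon$ and $\iota$ only touch the $(0,1)$-slots, and the Clifford-type relation on the antiholomorphic part is formally identical to the real one.

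Next I would take the trace of this identity over $A^{p,q}$. Writing $t_{p,q}:={\rm tr}[\iota^{g_{1}}_{p,q+1}(\widetilde{\eta})\varepsilon_{p,q}(\widetilde{\xi})]$, the trace of the relation gives the recursion
\begin{equation}
t_{p,q-1}+t_{p,q}=\langle\widetilde{\xi},\widetilde{\eta}\rangle_{g_{1}}\,{\rm tr}(I_{p,q})
=\langle\widetilde{\xi},\widetilde{\eta}\rangle_{g_{1}}\,C_n^p C_n^q,
\end{equation}
since $\dim A^{p,q}=C_n^p C_n^q$. The base case is $q=0$: on $A^{p,0}$ the operator $\varepsilon_{p,0}(\widetilde{\xi})$ maps into $A^{p,1}$ and $\iota^{g_1}_{p,1}(\widetilde{\eta})$ maps back, and one checks directly (as in the $m=0$ case of Lemma \ref{le:35}) that $t_{p,0}=\langle\widetilde{\xi},\widetilde{\eta}\rangle_{g_1} C_n^p$. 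Solving the recursion by alternating summation,
\begin{equation}
t_{p,q}=\langle\widetilde{\xi},\widetilde{\eta}\rangle_{g_{1}}\,C_n^p\bigl(C_n^q-C_n^{q-1}+\cdots+(-1)^qC_n^0\bigr)
=\langle\widetilde{\xi},\widetilde{\eta}\rangle_{g_{1}}\,C_n^p A_{n,q},
\end{equation}
which is exactly the claimed formula. The factor $C_n^p$ simply factors out throughout because the holomorphic $(p,0)$-part of every $(p,q)$-form is inert under $\varepsilon$ and $\iota$ acting on antiholomorphic covectors, so the trace over $A^{p,q}$ is $C_n^p$ times the trace over the antiholomorphic $\wedge^{0,q}$ part, and the latter reduces to the real-case computation with $n$ and $q$.

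The only genuine point requiring care—and the step I expect to be the main obstacle—is justifying the base case and the bookkeeping of the conjugation/Hermitian inner product: one must be careful that $\langle\widetilde{\xi},\widetilde{\eta}\rangle_{g_1}$ is the Hermitian pairing $g^{{\bf C},*}$ on the complexified cotangent space and that $\varepsilon(\widetilde\xi)^* = \iota^{g_1}(\widetilde\xi)$ with the correct conventions, so that the anticommutator identity above actually produces $\langle\widetilde\xi,\widetilde\eta\rangle_{g_1} I_{p,q}$ rather than $\langle\widetilde\eta,\widetilde\xi\rangle_{g_1} I_{p,q}$. Once the conventions from Section 5 (the orthonormal basis $\{\lambda_j,\overline{\lambda_j}\}$ and the induced inner product on $A^{p,q}$) are pinned down, the rest is the routine induction above, identical in structure to the proof of Lemma \ref{le:35}.
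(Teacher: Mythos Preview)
Your proposal is correct and follows essentially the same approach as the paper: both use the anticommutation relation $\varepsilon_{p,q-1}(\widetilde{\xi})\iota^{g_{1}}_{p,q}(\widetilde{\eta})+\iota^{g_{1}}_{p,q+1}(\widetilde{\eta})\varepsilon_{p,q}(\widetilde{\xi})=\langle\widetilde{\xi},\widetilde{\eta}\rangle_{g_{1}}I_{p,q}$, take traces (using the trace property ${\rm tr}(AB)={\rm tr}(BA)$) to obtain the recursion $t_{p,q-1}+t_{p,q}=\langle\widetilde{\xi},\widetilde{\eta}\rangle_{g_{1}}C_n^pC_n^q$, and then solve it from the base case $t_{p,0}=\langle\widetilde{\xi},\widetilde{\eta}\rangle_{g_1}C_n^p$ by alternating summation. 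Your extra remarks on why the $C_n^p$ factors out and on the Hermitian-pairing conventions are reasonable elaborations but do not change the method.
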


\begin{proof}
By using the {\rm trace} property and the relation
\begin{equation}
\varepsilon_{p,q-1}(\widetilde{\xi})\iota^{g_{1}}_{p,q}(\widetilde{\eta})+\iota^{g_{1}}_{p,q+1}(\widetilde{\eta})\varepsilon_{p,q}
(\widetilde{\xi})=\langle\widetilde{\xi},\widetilde{\eta}\rangle_{g_{1}}I_{p,q},
\end{equation}
where $I_{p,q}$ is the identity on $(p,q)$-forms, we get
\begin{equation}
{\rm tr}[\iota^{g_{1}}_{p,q}(\widetilde{\eta})\varepsilon_{p,q-1}(\widetilde{\xi})]+{\rm tr}[\iota^{g_{1}}_{p,q+1}(\widetilde{\eta})
\varepsilon_{p,q}(\widetilde{\xi})]=\langle\widetilde{\xi},\widetilde{\eta}\rangle_{g_{1}}C_n^pC_n^q.
\end{equation}
When $q=0$, we have
${\rm tr}[\iota^{g_{1}}_{p,1}(\widetilde{\eta})\varepsilon^{g_{1}}_{p,0}(\widetilde{\xi})]=\langle\widetilde{\xi},\widetilde{\eta}\rangle_{g_{1}}C_n^p$.
Then by (6.18), we can prove this Lemma.
\end{proof}

\noindent{\bf Proof of Theorem 6.5}
\begin{proof}
By Lemma 6.1 and (6.12), we have
\begin{eqnarray}
&&{\rm Tr}_{\wedge^{p,q}}[\sigma_L(F^{g_{1}}_{\overline{\partial}})(\xi)\sigma_L(F^{g_{2}}_{\overline{\partial}})(\eta)]\nonumber\\
&=&{\rm Tr}_{\wedge^{p,q}}\{2|\xi|^{-2}[2\sigma^{g_{1}}_L(\overline{\partial})\sigma^{g_{1}}_L(\overline{\partial}^*)-\sigma^{g_{1}}_L(\triangle)]
(\xi)2|\eta|^{-2}[2\sigma^{g_{2}}_L(\overline{\partial})\sigma^{g_{2}}_L(\overline{\partial}^*)-\sigma_L^{g_{2}}(\triangle)](\eta)\} \nonumber\\
&=&|\xi|_{g_{1}}^{-2}|\eta|_{g_{2}}^{-2}{\rm tr}[\varepsilon_{p,q-1}(\widehat{\xi})\iota^{g_{1}}_{p,q}(\widehat{\xi})
\varepsilon_{p,q-1}(\widehat{\eta})\iota^{g_{2}}_{p,q}(\widehat{\eta})] -|\xi|_{g_{1}}^{-2}{\rm tr}
[\varepsilon_{p,q-1}(\widehat{\xi})\iota^{g_{1}}_{p,q}(\widehat{\xi})] \nonumber\\
&&-|\eta|_{g_{2}}^{-2}{\rm tr}[\varepsilon_{p,q-1}(\widehat{\eta})\iota^{g_{2}}_{p,q}(\widehat{\eta})]+C_n^pC^q_n.
\end{eqnarray}

By Lemma 6.6, we have
\begin{eqnarray}
&&|\xi|_{g_{1}}^{-2}{\rm tr}[\varepsilon_{p,q-1}(\widehat{\xi})\iota^{g_{1}}_{p,q}(\widehat{\xi})]
=|\xi|_{g_{1}}^{-2}{\rm tr}[\iota^{g_{1}}_{p,q}(\widehat{\xi})\varepsilon_{p,q-1}(\widehat{\xi})]  \nonumber\\
&=&|\xi|_{g_{1}}^{-2}\langle\widehat{\xi},\widehat{\xi}\rangle_{g_{1}}C_n^pA_{n,q-1}=|\xi|_{g_{1}}^{-2}2|\xi|_{g_{1}}^{2}C_n^p A_{n,q-1}=2C_n^pA_{n,q-1}.
\end{eqnarray}

For $q\geq 1$ and $\xi_a,\xi_b,\eta_a,\eta_b\in A_{0,1}$, let

\begin{equation}
B_{p,q}(\xi_a,\xi_b,\eta_a,\eta_b)={\rm tr}[\varepsilon_{p,q-1}(\xi_a)\iota^{g_{1}}_{p,q}(\xi_b)
\varepsilon_{p,q-1}(\eta_a)\iota^{g_{2}}_{p,q}(\eta_b)].
\end{equation}

 By (6.17) and Lemma 6.6 and the {\rm trace} property, we have the relation
\begin{eqnarray}
&&B_{p,q}(\xi_a,\xi_b,\eta_a,\eta_b) \nonumber\\
&=&{\rm tr}\{[<\xi_a,\xi_b>_{g_{1}}I_{p,q}-\iota^{g_{1}}_{p,q+1}(\xi_b)\varepsilon_{p,q}(\xi_a)]
[\langle\eta_a,\eta_b\rangle_{g_{2}}I_{p,q}-\iota^{g_{2}}_{p,q+1}(\eta_b)\varepsilon_{p,q}(\eta_a)]\} \nonumber\\
&=&\langle\xi_a,\xi_b\rangle_{g_{1}}\langle\eta_a,\eta_b\rangle_{g_{2}}(C_n^pC_n^q-2C_n^pA_{n,q})+{\rm tr}[\varepsilon_{p,q}(\eta_a)\iota^{g_{1}}_{p,q+1}(\xi_b)
\varepsilon_{p,q}(\xi_a)\iota^{g_{2}}_{p,q+1}(\eta_b)],
\end{eqnarray}
which implies

\begin{equation}
B_{p,q+1}(\eta_a,\xi_b,\xi_a,\eta_b)=B_{p,q}(\xi_a,\xi_b,\eta_a,\eta_b)+\langle\xi_a,\xi_b\rangle_{g_{1}}\langle\eta_a,\eta_b\rangle_{g_{2}}(2C_n^pA_{n,q}-C_n^pC_n^q),
\end{equation}
\begin{eqnarray}
B_{p,1}(\xi_a,\xi_b,\eta_a,\eta_b)&=&{\rm tr}[\varepsilon_{p,0}(\xi_a)\iota^{g_{1}}_{p,1}(\xi_b)
\varepsilon_{p,0}(\eta_a)\iota^{g_{2}}_{p,1}(\eta_b)]\nonumber\\
&=&{\rm tr}[\iota^{g_{1}}_{p,1}(\eta_b)\varepsilon_{p,0}(\xi_a)\iota^{g_{2}}_{p,1}(\xi_b)\varepsilon_{p,0}(\eta_a)]
=\langle\eta_a,\xi_b\rangle_{g_{1}}\langle\xi_a,\eta_b\rangle_{g_{2}}C_n^p.
\end{eqnarray}

By (6.19) and (6.20), we have
\begin{equation}
{\rm Tr}_{\wedge^{p,q}}[\sigma_L(F^{g_{1}}_{\overline{\partial}})(\xi)\sigma_L(F^{g_{2}}_{\overline{\partial}})(\eta)]
=|\xi|_{g_{1}}^{-2}|\eta|_{g_{2}}^{-2}B_{p,q}(\widehat{\xi},\widehat{\xi},\widehat{\eta},\widehat{\eta})+C_n^pC_n^q-4C_n^pA_{n,q-1},
\end{equation}
\begin{equation}
B_{p,1}(\widehat{\xi},\widehat{\xi},\widehat{\eta},\widehat{\eta})
=\overline{\langle\widehat{\xi},\widehat{\eta}\rangle}_{g_{1}}\langle\widehat{\xi},\widehat{\eta}\rangle_{g_{2}}C_n^p,
\end{equation}
and
\begin{eqnarray}
B_{p,2}(\widehat{\xi},\widehat{\xi},\widehat{\eta},\widehat{\eta})
&=&B_{p,1}(\widehat{\eta},\widehat{\xi},\widehat{\xi},\widehat{\eta})
+\overline{\langle\widehat{\xi},\widehat{\eta}\rangle}_{g_{1}}\langle\widehat{\xi},\widehat{\eta}\rangle_{g_{2}}(2C_n^pA_{n,1}-C_n^pC_n^1)   \nonumber\\
&=&\langle\widehat{\xi},\widehat{\xi}\rangle_{g_{1}}\langle\widehat{\eta},\widehat{\eta}\rangle_{g_{2}}C_n^p
+\overline{\langle\widehat{\xi},\widehat{\eta}\rangle}_{g_{1}}\langle\widehat{\xi},\widehat{\eta}\rangle_{g_{2}}(2C_n^pA_{n,1}-C_n^pC_n^1).
\end{eqnarray}

By (6.25) and (6.26), we can get (6.13). By (6.25) and (6.27), we can get (6.14). By (6.25) and a recursive way, we have
\begin{equation}
{\rm Tr}_{\wedge^{p,q}}[\sigma_L(F^{g_{1}}_{\overline{\partial}})(\xi)\sigma_L(F^{g_{2}}_{\overline{\partial}})(\eta)]
=a_{p,q}\frac{\overline{\langle\widehat{\xi},\widehat{\eta}\rangle}_{g_{1}}\langle\widehat{\xi},\widehat{\eta}\rangle_{g_{2}}}{|\xi|_{g_{1}}^2|\eta|_{g_{2}}^2}+b_{p,q}.
\end{equation}
\end{proof}

\section{ $\Omega^{g_{1},g_{2}}_{\overline{\partial}}(f_1,f_2)$ for 1-dimensional Complex Manifolds}
\label{7}
In this section, we compute the double conformal invariant for 1-dimension complex manifolds.
By lemma 6.3 (for the definition of $\Omega^{g_{1},g_{2}}_{\overline{\partial}}(f_1,f_2)$), since the sum is taken
$|\alpha|+|\beta|+|\delta|=2; ~1\leq |\beta|; ~1\leq |\delta|$, we get $|\alpha|=0,~|\beta|=|\delta|=1$, then we have the four cases:
$\{\beta =(1,0),~ \delta=(1,0)\};~\{\beta =(1,0),~\delta =(0,1)\};~\{\beta =(0,1),~\delta =(1,0)\};~\{\beta =(0,1),~\delta =(0,1)\}.$

Then we have
\begin{eqnarray}
\Omega^{g_{1},g_{2}}_{\overline{\partial}}(f_1,f_2)
&=&\int_{|\xi|=1}{\rm tr}_{\wedge^{0,1}}
[\sum\frac{1}{\alpha!\beta!\delta!}D^{\beta}_x({f_1})D^{\alpha+\delta}_x({f_2})\partial^{\alpha+\beta}_{\xi}(\sigma_L(F^{g_{2}}_{\overline{\partial}}))
\partial^{\delta}_{\xi}(\sigma_L(F^{g_{2}}_{\overline{\partial}}))]\sigma(\xi)dx \nonumber\\
&=&\int_{|\xi|=1}\frac{1}{1!0!1!0!} D_{x}^{(1,0)}(f_{1}) D_{x}^{(1,0)}(f_{2})\partial_{(\xi_{1},~\xi_{2})}^{(1,0)}\partial_{(\eta_{1},~
\eta_{2})}^{(1,0)} {\rm trace}(\sigma_{L}^{F_{g_{1}}}(\xi) \sigma_{L}^{F_{g_{2}}}(\eta)) d\xi dx \nonumber\\
&&+\int_{|\xi|=1}\frac{1}{1!0!0!1!} D_{x}^{(1,0)}(f_{1}) D_{x}^{(0,1)}(f_{2})\partial_{(\xi_{1},~\xi_{2})}^{(1,0)}\partial_{(\eta_{1},~
\eta_{2})}^{(0,1)} {\rm trace}(\sigma_{L}^{F_{g_{1}}}(\xi) \sigma_{L}^{F_{g_{2}}}(\eta)) d\xi dx \nonumber\\
&&+\int_{|\xi|=1}\frac{1}{0!1!1!0!} D_{x}^{(0,1)}(f_{1}) D_{x}^{(1,0)}(f_{2})\partial_{(\xi_{1},~\xi_{2})}^{(0,1)}\partial_{(\eta_{1},~
\eta_{2})}^{(1,0)} {\rm trace}(\sigma_{L}^{F_{g_{1}}}(\xi) \sigma_{L}^{F_{g_{2}}}(\eta)) d\xi dx \nonumber\\
&&+\int_{|\xi|=1}\frac{1}{0!1!0!1!} D_{x}^{(0,1)}(f_{1}) D_{x}^{(0,1)}(f_{2})\partial_{(\xi_{1},~\xi_{2})}^{(0,1)}\partial_{(\eta_{1},~
\eta_{2})}^{(0,1)} {\rm trace}(\sigma_{L}^{F_{g_{1}}}(\xi) \sigma_{L}^{F_{g_{2}}}(\eta)) d\xi dx \nonumber\\
&=&D_{x}^{(1,0)}(f_{1}) D_{x}^{(1,0)}(f_{2})\int_{|\xi|=1}\partial_{\xi_{1}}\partial_{\eta_{1}}
 {\rm trace}(\sigma_{L}^{F_{g_{1}}}(\xi) \sigma_{L}^{F_{g_{2}}}(\eta)) d\xi dx \nonumber\\
&&+D_{x}^{(1,0)}(f_{1}) D_{x}^{(0,1)}(f_{2})\int_{|\xi|=1}\partial_{\xi_{1}}\partial_{
\eta_{2}} {\rm trace}(\sigma_{L}^{F_{g_{1}}}(\xi) \sigma_{L}^{F_{g_{2}}}(\eta)) d\xi dx \nonumber\\
&&+D_{x}^{(0,1)}(f_{1}) D_{x}^{(1,0)}(f_{2})\int_{|\xi|=1}\partial_{\xi_{2}}\partial_{\eta_{1}
} {\rm trace}(\sigma_{L}^{F_{g_{1}}}(\xi) \sigma_{L}^{F_{g_{2}}}(\eta)) d\xi dx \nonumber\\
&&+D_{x}^{(0,1)}(f_{1}) D_{x}^{(0,1)}(f_{2})\int_{|\xi|=1}\partial_{\xi_{2}}\partial_{
\eta_{2}}{\rm trace}(\sigma_{L}^{F_{g_{1}}}(\xi) \sigma_{L}^{F_{g_{2}}}(\eta)) d\xi dx.
\end{eqnarray}

For $\xi=\sum_{1\leq i \leq 2n}\xi_ie^i$, let $g_{1}$ be a Riemannian metric on the real tangent bundle $T^{\bf R}M$ of $M$,
and define $g_{2}(e_{i},e_{j})=\langle e_{i},e_{j}\rangle_{g_{2}}$. By (6.10) and direct computation, we get
\begin{equation}
\overline{\langle\widehat{\xi},\widehat{\eta}\rangle}_{g_{1}}=2(\xi_{1}-i\xi_{2})(\eta_{1}+i\eta_{2}),
\end{equation}
\begin{equation}
\langle\widehat{\xi},\widehat{\eta}\rangle_{g_{2}}=(\xi_{1}+i\xi_{2})(\eta_{1}-i\eta_{2})(\langle e_{1},e_{1}\rangle_{g_{2}}+\langle e_{2},e_{2}\rangle_{g_{2}}).
\end{equation}

In this subsection we denote $|_{|\xi|_{g_{1}}=1}^{\eta=\xi} $ by $|_{*}$. By (7.2),(7.3) and Theorem 6.5, we have
\begin{equation}
{\rm Tr}_{\wedge^{0,1}}[\sigma_L(F^{g_{1}}_{\overline{\partial}})(\xi)\sigma_L(F^{g_{2}}_{\overline{\partial}})(\eta)]
=\frac{2(\xi_{1}^{2}+\xi_{2}^{2})(\eta_{1}^{2}+\eta_{2}^{2})(\langle e_{1},e_{1}\rangle_{g_{2}}+\langle e_{2},e_{2}\rangle_{g_{2}})}{|\xi|_{g_{1}}^2|\eta|_{g_{2}}^2}-3.
\end{equation}

By (7.4), we get
\begin{eqnarray}
&&\partial_{\xi_{1}}\partial_{\eta_{1}}{\rm trace}(\sigma_{L}^{F_{g_{1}}}(\xi) \sigma_{L}^{F_{g_{2}}}(\eta))=
(\langle e_{1},e_{1}\rangle_{g_{2}}+\langle e_{2},e_{2}\rangle_{g_{2}}) \nonumber\\
&\times&[\frac{8\xi_{1}\eta_{1}|\xi|_{g_{1}}^2-8\xi_{1}\eta_{1}(\xi_{1}^{2}+\xi_{2}^{2})}{|\xi|_{g_{1}}^4|\eta|_{g_{2}}^2}
-\langle e_{1},\eta\rangle_{g_{2}}\times\frac{8\xi_{1}(\eta_{1}^{2}+\eta_{2}^{2})|\xi|_{g_{1}}^2 -8\xi_{1}(\xi_{1}^{2}+\xi_{2}^{2})
(\eta_{1}^{2}+\eta_{2}^{2})}{|\xi|_{g_{1}}^4|\eta|_{g_{2}}^4}].
\end{eqnarray}

Then
\begin{equation}
\partial_{\xi_{1}}\partial_{\eta_{1}}{\rm trace}(\sigma_{L}^{F_{g_{1}}}(\xi) \sigma_{L}^{F_{g_{2}}}(\eta))|_{*}=0.
\end{equation}

Similar to the computation of the  first case, so we get

\begin{equation}
\partial_{\xi_{1}}\partial_{\eta_{2}}{\rm trace}(\sigma_{L}^{F_{g_{1}}}(\xi) \sigma_{L}^{F_{g_{2}}}(\eta))|_{*}=0;
~~\partial_{\xi_{2}}\partial_{\eta_{1}}{\rm trace}(\sigma_{L}^{F_{g_{1}}}(\xi) \sigma_{L}^{F_{g_{2}}}(\eta))|_{*}=0;
\end{equation}
\begin{equation}
\partial_{\xi_{2}}\partial_{\eta_{2}}{\rm trace}(\sigma_{L}^{F_{g_{1}}}(\xi) \sigma_{L}^{F_{g_{2}}}(\eta))|_{*}=0.
\end{equation}

Then we deduce the conformal invariant for 1-dimension complex manifolds
\begin{thm}\label{th:71}
For 1-dimension complex manifolds, $\Omega^{g_{1},g_{2}}_{\overline{\partial}}(f_1,f_2)=0$.
\end{thm}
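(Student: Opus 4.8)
The plan is to follow exactly the computational scheme of Theorem~6.4 and Lemma~6.3, now specialized to $n=1$, and to show that every one of the four coefficient integrals appearing in the expansion of $\Omega^{g_{1},g_{2}}_{\overline{\partial}}(f_1,f_2)$ vanishes. Since $n=1$ forces $|\alpha|+|\beta|+|\delta|=2$ with $|\beta|\geq 1$ and $|\delta|\geq 1$, we necessarily have $|\alpha|=0$ and $|\beta|=|\delta|=1$, so the sum reduces to the four multi-index cases $\{\beta=(1,0),\delta=(1,0)\}$, $\{\beta=(1,0),\delta=(0,1)\}$, $\{\beta=(0,1),\delta=(1,0)\}$, $\{\beta=(0,1),\delta=(0,1)\}$, yielding the expansion displayed in (7.1). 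The point is that each summand is a first derivative in $\xi$ of $\sigma_L(F^{g_{1}}_{\overline{\partial}})$ times a first derivative in $\eta$ of $\sigma_L(F^{g_{2}}_{\overline{\partial}})$, traced over $\wedge^{0,1}$, and then restricted to $\eta=\xi$ on the unit circle $|\xi|_{g_{1}}=1$.

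First I would write down ${\rm Tr}_{\wedge^{0,1}}[\sigma_L(F^{g_{1}}_{\overline{\partial}})(\xi)\sigma_L(F^{g_{2}}_{\overline{\partial}})(\eta)]$ explicitly. Using Theorem~6.5 in the case $q=1$, $p=0$ (so $p+q=n=1$, $C^{p}_{n}=C^{0}_{1}=1$, $C^{1}_{1}=1$), together with the formulas (6.10), (7.2), (7.3) for $\widehat{\xi}$ and the Hermitian inner products $\overline{\langle\widehat{\xi},\widehat{\eta}\rangle}_{g_{1}}$ and $\langle\widehat{\xi},\widehat{\eta}\rangle_{g_{2}}$, one obtains the closed form (7.4), namely a scalar function of $(\xi,\eta)$ of the shape
\begin{equation}
{\rm Tr}_{\wedge^{0,1}}[\sigma_L(F^{g_{1}}_{\overline{\partial}})(\xi)\sigma_L(F^{g_{2}}_{\overline{\partial}})(\eta)]
=\frac{2(\xi_{1}^{2}+\xi_{2}^{2})(\eta_{1}^{2}+\eta_{2}^{2})(\langle e_{1},e_{1}\rangle_{g_{2}}+\langle e_{2},e_{2}\rangle_{g_{2}})}{|\xi|_{g_{1}}^{2}|\eta|_{g_{2}}^{2}}-3.
\end{equation}
The crucial structural feature is that the numerator factors as (a function of $\xi$ alone) times (a function of $\eta$ alone), and that $|\eta|_{g_{2}}^{2}$ in the denominator depends only on $\eta$ while $|\xi|_{g_{1}}^{2}$ depends only on $\xi$.

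Next I would differentiate once in a $\xi$-variable and once in an $\eta$-variable. By the product structure just noted, $\partial_{\xi_{i}}\partial_{\eta_{j}}$ applied to (7.4) is a sum of terms each of which is (some derivative in $\xi$ of the $\xi$-part) times (some derivative in $\eta$ of the $\eta$-part); the constant $-3$ dies under either derivative. After computing these — this is the routine calculation of (7.5) — one then sets $\eta=\xi$ and imposes $|\xi|_{g_{1}}=1$. The key algebraic cancellation is that on the diagonal $\eta=\xi$ the two competing terms in each expression (the one coming from differentiating the $\xi$-numerator and the one coming from differentiating $|\xi|_{g_{1}}^{-2}$, resp. the analogous pair on the $\eta$-side) exactly cancel, because $\partial_{\xi_i}\log\big((\xi_1^2+\xi_2^2)/|\xi|_{g_1}^2\big)$ evaluated against the matching $\eta$-derivative produces opposite signs; concretely this yields $\partial_{\xi_{1}}\partial_{\eta_{1}}{\rm Tr}|_{*}=0$ as in (7.6), and by the same argument with $1\leftrightarrow 2$ interchanged in either slot, also $\partial_{\xi_{1}}\partial_{\eta_{2}}{\rm Tr}|_{*}=\partial_{\xi_{2}}\partial_{\eta_{1}}{\rm Tr}|_{*}=\partial_{\xi_{2}}\partial_{\eta_{2}}{\rm Tr}|_{*}=0$, which are (7.7)--(7.8).

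Finally, substituting these four vanishing evaluations into (7.1), every integrand $A_{k}$ is identically zero on the unit circle, hence every integral $\int_{|\xi|=1}A_{k}\,d\xi$ vanishes, and therefore $\Omega^{g_{1},g_{2}}_{\overline{\partial}}(f_1,f_2)=0$. I expect the main obstacle to be purely bookkeeping: carrying the two metrics $g_1$ and $g_2$ through the differentiation of the quotient (7.4) without sign errors, and checking that the diagonal restriction $\eta=\xi$ together with the normalization $|\xi|_{g_{1}}=1$ really does kill each term — there is no conceptual difficulty beyond verifying that the $\xi$-numerator and the $\xi$-denominator scale the same way, which is what makes the logarithmic-derivative cancellation work.
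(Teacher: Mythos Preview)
Your proposal is correct and follows essentially the same route as the paper: reduce via Lemma~6.3 to the four first-order cases, invoke Theorem~6.5 with $p=0,\,q=1$ to obtain (7.4), then differentiate and observe that each $\partial_{\xi_i}\partial_{\eta_j}$ of the trace vanishes at the restriction $|_*$. The only cosmetic difference is that the paper writes out the derivative (7.5) and exhibits the common factor $|\xi|_{g_1}^2-(\xi_1^2+\xi_2^2)$ (which is identically zero in the $g_1$-orthonormal frame) rather than phrasing it as your logarithmic-derivative cancellation; note in particular that the vanishing already occurs before restricting to $\eta=\xi$, so the diagonal restriction is not actually where the cancellation lives.
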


By Theorem 6.5 and Theorem 7.1, we have
\begin{con}\label{le:72}
 For any dimensional complex manifolds, $\Omega^{g_{1},g_{2}}_{\overline{\partial}}(f_1,f_2)\equiv 0$.
\end{con}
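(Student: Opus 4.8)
The plan is to imitate, in the complex‑analytic setting, exactly the structure that produced Theorem 7.1, but carried out for an arbitrary pair $(p,q)$ with $p+q=n$ rather than only for $n=1$. The starting point is the explicit trace formula from Theorem 6.5: in all three regimes ($q=1$, $q=2$, $q\ge 3$) the trace
$\mathrm{Tr}_{\wedge^{p,q}}[\sigma_L(F^{g_{1}}_{\overline{\partial}})(\xi)\sigma_L(F^{g_{2}}_{\overline{\partial}})(\eta)]$
has the shape
$c_{p,q}\,\dfrac{\overline{\langle\widehat{\xi},\widehat{\eta}\rangle}_{g_{1}}\langle\widehat{\xi},\widehat{\eta}\rangle_{g_{2}}}{|\xi|_{g_{1}}^{2}|\eta|_{g_{2}}^{2}}+d_{p,q}$
for constants $c_{p,q},d_{p,q}$. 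First I would substitute into this the identity (6.10) for $\widehat{\xi}$, which shows that $\langle\widehat{\xi},\widehat{\eta}\rangle_{g_{i}}$ factors holomorphically/antiholomorphically in the real coordinates $\xi_j,\eta_j$, exactly as in (7.2)–(7.3). For general $n$ the key structural fact is that, after this substitution, $\overline{\langle\widehat{\xi},\widehat{\eta}\rangle}_{g_{1}}\langle\widehat{\xi},\widehat{\eta}\rangle_{g_{2}}$ is a product of a factor depending only on $\xi$ with a factor depending only on $\eta$, divided by $|\xi|_{g_{1}}^{2}|\eta|_{g_{2}}^{2}$ in the flat case $|\xi|_{g_{1}}^{2}=\sum\xi_j^{2}$ ; hence $\psi(\xi,\eta)=\mathrm{Tr}_{\wedge^{p,q}}[\sigma_L(F^{g_{1}}_{\overline\partial})(\xi)\sigma_L(F^{g_{2}}_{\overline\partial})(\eta)]$ splits as $A(\xi)B(\eta)+\text{const}$.

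The second step is to feed this product structure into Theorem 6.4. By Theorem 6.4 the flat invariant is determined by
$\sum A_{a,b}u^{a}v^{b}=\int_{|\xi|=1}\big(T_{2n}'\psi(\xi,\xi,u+v,v)-T_{2n}'\psi(\xi,\xi,v,v)\big)\sigma(\xi)$,
and $T_{2n}'\psi$ is by definition built only from the \emph{mixed} terms of the Taylor expansion, i.e. those of the form $\partial_\xi^\beta(\sigma_L(F^{g_1}_{\overline\partial})(\xi))\,\partial_\eta^\delta(\sigma_L(F^{g_2}_{\overline\partial})(\eta))$ with $|\beta|\ge 1$ and $|\delta|\ge1$. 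Because $\psi(\xi,\eta)=A(\xi)B(\eta)+\text{const}$, each such mixed term is $\big(\tfrac{u^\beta}{\beta!}\partial_\xi^\beta A(\xi)\big)\big(\tfrac{v^\delta}{\delta!}\partial_\eta^\delta B(\eta)\big)$, and the constant contributes nothing to any mixed derivative. Thus $T_{2n}'\psi(\xi,\eta,u,v)$ is itself a sum of products (derivative of $A$ at $\xi$ in $u$)$\times$(derivative of $B$ at $\eta$ in $v$). I would then evaluate at $\xi=\eta$ lying on the unit sphere and compute $T_{2n}'\psi(\xi,\xi,u+v,v)-T_{2n}'\psi(\xi,\xi,v,v)$ explicitly; the model computation (7.5)–(7.8) for $n=1$ is the prototype — there every mixed second derivative of the trace vanishes once one restricts to $\eta=\xi$, $|\xi|_{g_1}=1$, precisely because of the cancellation between the numerator's scaling and the denominators $|\xi|_{g_1}^2$, $|\eta|_{g_2}^2$.

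The core of the argument, then, is a homogeneity/cancellation lemma: $A(\xi)/|\xi|_{g_1}^{2}$ and $B(\eta)/|\eta|_{g_2}^{2}$ are each homogeneous of degree $0$, being (up to constants) the leading symbols of the $0$‑order operators $F^{g_i}_{\overline\partial}$; I would show that for a degree‑$0$ homogeneous function $G$, the restriction to the unit sphere of the relevant alternating combination of its Taylor jet, $T_{2n}'$‑truncated and then symmetrized as in (6.19), integrates to zero over $|\xi|=1$. Concretely, one differentiates the identity $G(t\xi)=G(\xi)$ repeatedly in $t$ and in $\xi$ to express high‑order radial derivatives in terms of tangential ones, and one observes that in $\sum A_{a,b}u^{a}v^{b}$ the combination $T_{2n}'\psi(\xi,\xi,u+v,v)-T_{2n}'\psi(\xi,\xi,v,v)$ kills exactly the surviving radial pieces, as witnessed already in the $n=1$ case by (7.6)–(7.8). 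The main obstacle I anticipate is bookkeeping rather than conceptual: one must check that no "purely tangential" term survives the integration $\int_{|\xi|=1}\cdots\sigma(\xi)$ for general $n$, and in particular that the constants $c_{p,q}$ appearing in Theorem 6.5 do not conspire to produce a nonzero residue; this is where a clean invariant‑theoretic argument — noting that any such $\int_{|\xi|=1}$ of a mixed Taylor combination of two degree‑$0$ homogeneous symbols of the \emph{same} geometric origin must be a scalar multiple of the Connes invariant of a conformally flat metric, which is known to vanish — would be needed to finish, rather than a term‑by‑term expansion.
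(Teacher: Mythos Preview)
First, note that the paper does not prove this statement: it is recorded as a \emph{Conjecture} (7.2), motivated by Theorem 6.5 and the $n=1$ computation of Theorem 7.1, and is explicitly left open (see Remark 7.3). So there is no proof in the paper to compare against; the question is only whether your outline would establish the conjecture.

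The central structural claim in your proposal is false for $n\ge 2$. You assert that $\overline{\langle\widehat{\xi},\widehat{\eta}\rangle}_{g_{1}}\langle\widehat{\xi},\widehat{\eta}\rangle_{g_{2}}$ is a product of a factor in $\xi$ alone with a factor in $\eta$ alone, so that $\psi(\xi,\eta)=A(\xi)B(\eta)+\text{const}$. This is exactly what happens when $n=1$: from (7.2)--(7.3) one has $\overline{\langle\widehat{\xi},\widehat{\eta}\rangle}_{g_{1}}\langle\widehat{\xi},\widehat{\eta}\rangle_{g_{2}}=\text{const}\cdot(\xi_1^2+\xi_2^2)(\eta_1^2+\eta_2^2)$, and in fact (7.4) shows $\psi$ is \emph{constant} once one divides by $|\xi|_{g_1}^2|\eta|_{g_2}^2$, which is the true reason (7.6)--(7.8) all vanish. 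For $n\ge 2$, however, $\langle\widehat{\xi},\widehat{\eta}\rangle_{g_i}=\sum_{j,k}(\xi_j+\sqrt{-1}\xi_{j+n})\overline{(\eta_k+\sqrt{-1}\eta_{k+n})}\,g_i^{{\bf C},*}(\overline{\lambda_j},\overline{\lambda_k})$ is a genuine Hermitian form, not a rank-one product; already for the standard metric on ${\bf C}^2$ it equals $2\sum_{j=1}^{2}(\xi_j+\sqrt{-1}\xi_{j+2})(\eta_j-\sqrt{-1}\eta_{j+2})$, which does not factor. Hence $\psi(\xi,\eta)$ does \emph{not} split as $A(\xi)B(\eta)+\text{const}$, and the argument that the mixed Taylor terms $T_{2n}'\psi$ inherit a product structure collapses.

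Your fallback ``homogeneity/cancellation'' and ``invariant-theoretic'' arguments also do not close the gap. Degree-$0$ homogeneity of the leading symbols is shared by the real $d$-case, yet there $\Omega^{g_1,g_2}_{d}(f_1,f_2)$ is computed in Section~4 to be nonzero; so homogeneity alone cannot force the sphere integral to vanish. Likewise, the assertion that the result ``must be a scalar multiple of the Connes invariant of a conformally flat metric, which is known to vanish'' is incorrect: Connes' invariant for a single metric is precisely the (nontrivial) Polyakov-type action discussed in \cite{Co,Ug1}. What is genuinely special in the $\overline{\partial}$-case, and what any proof of the conjecture would have to exploit, is the holomorphic/antiholomorphic splitting of $\widehat{\xi}$ in (6.10) together with the $J$-invariance of the metrics --- but your outline does not isolate a mechanism beyond $n=1$ that uses this.
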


\begin{rem}\label{th:73}
In this paper, for compact real manifolds, a new double conformal invariant is constructed using the Wodzicki residue and the $d$ operator
in the framework of Connes. In the flat case, we compute this double conformal invariant, and in some special case, we also
compute this double conformal invariants. For complex manifolds, a new double conformal invariant is constructed using
the Wodzicki residue and the $\bar{\partial}$ operator in the same way, and this double conformal invariant is computed
in the flat case. We hope to compute the double conformal invariants in general case in the future.
\end{rem}
\section*{ Acknowledgements}
The second author was partially supported by National Science Foundation of China under Grant No.10801027, and Fok Ying Tong
Education Foundation under Grant No.121003.
\section*{References}

\end{document}